\documentclass[twoside, 10pt]{article}
\usepackage{amsmath,amsthm,amssymb,amscd,amstext,amsfonts}
\usepackage[utf8]{inputenc}
\usepackage{mathtools}
\usepackage{mathrsfs}
\usepackage{thmtools}
\usepackage{latexsym}
\usepackage{verbatim}
\usepackage{framed}
\usepackage{graphicx}
\usepackage{stmaryrd}
\usepackage{enumitem}
\usepackage{fullpage}
\usepackage{bm}
\usepackage{url}
\usepackage{physics}
\usepackage{dsfont}
\usepackage{todonotes}
\usepackage{tikz}
\usepackage{graphicx}
\usepackage{titling}
\usepackage{color}
\usepackage{xpatch}
\usepackage{algorithm}
\usepackage{algorithmic}
\usepackage{epsf}

\usepackage[small]{titlesec}
\titlelabel{\thetitle.\enspace}

\usepackage[small, labelfont=bf, labelsep=period]{caption}

\usepackage[breaklinks=false, linktocpage=true,]{hyperref}

\hypersetup{
	colorlinks = true,
    linkcolor={purple},
    urlcolor={purple},
    citecolor={blue!80!black}
}

\makeatletter
\def\tagform@#1{\maketag@@@{(\ignorespaces{\oldstylenums{#1}}\unskip\@@italiccorr)}}
\renewcommand{\eqref}[1]{\textup{{\normalfont(\oldstylenums{\ref{#1}}}\normalfont)}}
\newcommand{\ps@bookheader}{%
\renewcommand\@oddfoot{\hfil}%
\renewcommand\@evenfoot{\hfil}%
\renewcommand\@oddhead{\ifnum\value{page}>1
{\small\hfil GOH AND HATAMI}\hfil\thepage\else\hfil\fi}}
\renewcommand\@evenhead{\thepage\hfil\small BLOCK COMPLEXITY AND IDEMPOTENT SCHUR MULTIPLIERS\hfil}%
\makeatother
\pagestyle{bookheader}

\usepackage[letterpaper, total={5.5in, 8in}, vmarginratio=1:1, hmarginratio=1:1, headsep=21pt]{geometry}

\usepackage[nameinlink, noabbrev, capitalize]{cleveref}

\declaretheoremstyle[bodyfont=\normalfont\slshape, notefont=\normalfont\itshape,notebraces={{\rm(}}{{\rm)}}, postheadspace=0.5em,headpunct={\rm.}, spaceabove=8pt, spacebelow=8pt]{slbody}

\declaretheorem[name=Theorem, numberwithin=section, style=slbody]{theorem}
\declaretheorem[name=Lemma, numberwithin=section, sibling=theorem, style=slbody]{lemma}

\declaretheorem[name=Proposition, numberwithin=section, sibling=theorem, style=slbody]{proposition}
\declaretheorem[name=Conjecture, numberwithin=section, sibling=theorem, style=slbody]{conjecture}

\renewcommand\norm[1]{\left|\!\left|#1\right|\!\right|}
\newcommand\normm[1]{\biggl|\!\biggl|#1\biggr|\!\biggr|}

\newcommand\bignorm[1]{\bigl|\!\bigl|#1\bigr|\!\bigr|}
\newcommand\normmax[1]{|\!|#1|\!|_{\rm max}}

\newcommand\normrow[1]{\left|\!\left|#1\right|\!\right|_{\rm row}}
\newcommand\normcol[1]{\left|\!\left|#1\right|\!\right|_{\rm col}}
\newcommand\normgamma[1]{|\!|#1|\!|_{\gamma_2}}

\newcommand\opnorm[1]{\left|\!\left|#1\right|\!\right|_{\rm op}}
\newcommand\normalg[1]{\left|\!\left|#1\right|\!\right|_{\rm A}}
\newcommand\normmultiplier[1]{\left|\!\left|#1\right|\!\right|_{\rm m}}

\newcommand\eps{\epsilon}
\renewcommand\tilde{\widetilde}

\renewcommand\hat{\widehat}

\DeclareMathOperator{\block}{block}
\DeclareMathOperator{\Ldim}{Ldim}
\DeclareMathOperator{\sgn}{sgn}

\newcommand{\RR}{\mathbf{R}}   
\newcommand{\CC}{\mathbf{C}}   
\newcommand{\ZZ}{\mathbf{Z}}   
\newcommand{\NN}{\mathbf{N}}   

\newcommand{\one}{\mathop{\mathbf{1}}\nolimits}
\newcommand\matr[4]{\big(\genfrac{}{}{0pt}{1}{#1}{#3}\genfrac{}{}{0pt}{1}{#2}{#4}\big)}
\newcommand\ex{\mathop{\mathbf{E}}\nolimits}

\newcommand\alg[2]{\medbreak\noindent{\bf#1} ({\it#2\/}).\enspace\ignorespaces}

\setlength{\marginparwidth}{3cm}

\renewcommand{\maketitle}{%
  \begin{center}
    {\large\bf Block complexity and idempotent Schur multipliers}\\
    \vskip 36pt
    {\sc Marcel K.\ Goh\enspace{\rm and}\enspace Hamed Hatami}
    \medskip
    \vskip 36pt
  \end{center}
}
\title{}\author{}\date{}

\begin{document}

\maketitle
\renewenvironment{abstract}{\quotation\noindent\small{\bfseries\abstractname.\enspace}}{\endquotation}

\begin{abstract}
We call a matrix blocky if, up to row and column permutations, it can be obtained from an identity matrix by repeatedly applying one of the following operations: duplicating a row, duplicating a column, or adding a zero row or column. Blocky matrices are precisely the boolean matrices that are contractive when considered as Schur multipliers. It is conjectured that any boolean matrix with Schur multiplier norm at most $\gamma$ is expressible as a signed sum 
\begin{equation*}A = \sum_{i=1}^L \pm B_i\end{equation*}
for some blocky matrices $B_i$, where $L$ depends only on $\gamma$. This conjecture is an analogue of Green and Sanders's quantitative version of Cohen's idempotent theorem. In this paper, we prove bounds on $L$ that are polylogarithmic in the dimension of $A$. Concretely, if $A$ is an $n\times n$ matrix, we show that one may take $L = 2^{O(\gamma^7)} \log(n)^2$. 
\vskip5pt
\noindent\textbf{Keywords.}\enspace Block complexity, Schur multipliers, factorization norm.
\vskip5pt
\noindent\textbf{MSC2020 Classification.}\enspace 15B36, 47L80, 94D10.
\end{abstract}

\vskip50pt
\baselineskip=13.5pt

\section{Introduction} 

A celebrated classical result in harmonic analysis is the 1960 theorem of P.~J.~Cohen~\cite{cohen1960}, which states that a measure $\mu$ on a locally compact abelian group $G$ is idempotent if and only if $\hat\mu$ is a member of the coset ring of $\hat G$. This was conjectured by W.\ Rudin \cite{rudin1959}, who settled the case of discrete groups and finite-dimensional torus groups. An explicit description of idempotent measures on the circle group was given earlier by H.\ Helson \cite{helson1954}. Cohen's idempotent theorem is sometimes referred to as the Cohen--Host idempotent theorem, as in 1986, B.~Host extended Cohen's result to arbitrary locally compact groups~\cite{host1986}.

For finite groups, Cohen's theorem holds without offering any content, since in this case, the coset ring consists of all functions on $\hat G$. However, a 2008 theorem of B.~Green and T.~Sanders~\cite{greensandersannals} gives a quantitative version of the idempotent theorem that is useful even for finite groups.
Subsequent refinements by Sanders~\cite{sanders2011, sanders2020} give better bounds and show that the group need not even be abelian for this result to hold. For the sake of simplicity, in the brief exposition below, we shall take $G$ to be abelian and finite.

Let $\normalg{f}$ denote the Fourier algebra norm $\sum_{a\in \hat G} \bigl|\hat f(a)\bigr|$. A classical result of Kawada and It\^o~\cite{kawadaito} states that a nonzero boolean function $f:G \to \{0,1\}$ satisfies $\normalg{f}  \le 1$ if and only if $f=\one_{s+H}$ for a coset $s+H$ of $G$. The quantitative idempotent theorem states that every  $f:G \to \{0,1\}$ with $\normalg{f} \le \gamma$ can be expressed as a signed sum 
\begin{equation}
\label{eq:green_sanders_sum}
f = \sum_{i=1}^L \pm \one_{s_i + H_i}
\end{equation}
for some $L \le \exp(\gamma^{3+o(1)})$ and some cosets $s_1 + H_1,\ldots, s_L + H_L$ of $G$. The term ``idempotent'' refers to the fact that boolean functions are precisely those that satisfy $f^2=f$, and therefore, Green and Sanders' theorem characterizes the idempotents that have small Fourier algebra norm.

\paragraph{Blocky matrices and Schur multipliers.}\hskip-0.3em%
The aim of this paper is to investigate an analogue of the program surrounding Cohen’s idempotent theorem in the setting of Schur multipliers, which we now introduce.

Let $\ell^2$ denote the Hilbert space of all square summable complex sequences, and let $B(\ell^2)$ be the collection of all bounded linear operators $A:\ell^2 \to \ell^2$ equipped with its operator norm 
\[\opnorm{A} = \sup_{\substack{y\in \ell^2 \\ y \neq 0}} \frac{\norm{Ay}_2}{\norm{y}_2}.\]  
Every operator $A \in B(\ell^2)$ is uniquely identified by its associated matrix $(a_{i,j})_{i,j \in \NN}$ defined by 
$a_{i,j} = \bigl\langle A e_i, e_j\bigr\rangle$, where $\{e_i\}_{i \in \NN}$ is the standard orthonormal basis of $\ell^2$.


Every matrix $M : \NN \times \NN \to \CC$  gives rise to a linear transformation on the space of all matrices $A:\NN \times \NN \to \CC$ via the map $S_M:A \mapsto M \circ A$, where $\circ$ denotes the entrywise (Schur) product. The matrix $M$ is called a \emph{Schur multiplier} if $S_M$ maps $B(\ell^2)$ into itself, that is $M\circ A \in B(\ell^2)$ for every  $A \in B(\ell^2)$. Equivalently, $M$ is a Schur multiplier if its \emph{Schur multiplier norm}, defined by
\begin{equation} \normmultiplier{M}= \sup_{\substack{A\in B(\ell^2) \\ A \neq 0}} \frac{\opnorm{M\circ A}}{\opnorm A},
\end{equation}
is finite. Since the Schur multiplier norm satisfies  
\begin{equation}\normmultiplier{A\circ B} \le \normmultiplier{A} \cdot \normmultiplier{B},\label{eqgammaalgebra}
\end{equation}
the set of Schur multipliers is closed under addition and Schur product, and thus forms a Banach algebra. 

An element $a$ of an algebra is said to be \emph{idempotent} if $a^2 = a$. Any matrix satisfying $M\circ M = M$ must be boolean, but not all infinite boolean matrices are Schur multipliers. A boolean Schur multiplier $M: \NN \times \NN \to \{0,1\}$ corresponds to the natural operation that maps any matrix $A$ to a new matrix that agrees with $A$ on the support of $M$ and is zero everywhere else. The following is the central question of interest in this article: 
\begin{quote}
\textsl{What are the idempotent elements of the Schur multiplier algebra?}
\end{quote}
As a starting point, we discuss a simple characterization of the \emph{contractive} idempotent elements, i.e., those with Schur multiplier norm at most 1.  

We call a boolean matrix $B:\NN \times \NN \to \{0,1\}$ \emph{blocky} if there exist families $\{S_i\}_{i\in \NN}$ and $\{T_i\}_{i\in \NN}$ of pairwise disjoint subsets of $\NN$ such that the support of $B$ is exactly $\bigcup_{i\in \NN} S_i\times T_i$. Simple examples of blocky matrices are zero matrices, all-$1$s matrices, and identity matrices. 

\begin{proposition}[{\rm\cite{liv95}}] \label{propcontractive}
If a boolean matrix $A$ satisfies $\normmultiplier{A}\le 1$, then $A$ is a
blocky matrix. In particular, either $A = 0$ and $\normmultiplier{A} = 0$,
or $A$ is a nonzero blocky matrix and $\normmultiplier{A} = 1$.
\end{proposition}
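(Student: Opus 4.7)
The plan is to prove both directions. The forward direction --- that a nonzero blocky matrix has Schur multiplier norm equal to $1$ --- is short; the harder converse is that any boolean matrix with multiplier norm at most $1$ must be blocky.

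For the forward direction, suppose $B$ is blocky with support $\bigcup_i S_i \times T_i$, where $\{S_i\}$ and $\{T_i\}$ are each pairwise disjoint families. For any $A \in B(\ell^2)$, the matrix $B \circ A$ is supported on $\bigcup_i S_i \times T_i$, so after permuting rows and columns it becomes block diagonal with blocks $A|_{S_i \times T_i}$ (plus a zero block on the complementary indices). The operator norm of a block-diagonal operator is the supremum of the operator norms of its blocks, and each block is a submatrix of $A$, hence has norm at most $\opnorm{A}$. This gives $\opnorm{B \circ A} \le \opnorm{A}$ and thus $\normmultiplier{B} \le 1$. The matching lower bound $\normmultiplier{B} \ge \max_{i,j} |B_{i,j}| = 1$ follows by testing $B$ against a rank-one matrix with a single entry of absolute value $1$ at an $(i,j)$ where $B_{i,j} = 1$.

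For the converse, I would first establish a combinatorial reformulation of blockiness: a boolean $A$ is blocky if and only if any two rows of $A$ with nonzero support have either equal or disjoint column-supports. The ``only if'' direction is immediate from the definition; for the ``if'' direction, group rows by their common nonzero support to obtain the $S_i$'s and $T_i$'s, and note that the $T_i$'s are automatically pairwise disjoint by hypothesis. Now assume toward a contradiction that $A$ is boolean, $\normmultiplier{A} \le 1$, and $A$ is not blocky. Then there exist rows $i_1 \ne i_2$ and columns $j_1 \ne j_2$ such that, after possibly swapping the two row labels or the two column labels, the $2 \times 2$ submatrix of $A$ at these positions equals $A' = \matr{1}{1}{1}{0}$. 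The multiplier norm is monotone under restriction to a submatrix --- any test matrix for $A'$ may be padded with zero rows and columns to a test matrix for $A$ of the same operator norm, and the Schur product is unchanged outside the submatrix --- so $\normmultiplier{A} \ge \normmultiplier{A'}$. The argument concludes by an explicit computation that $\normmultiplier{A'} > 1$: testing $A'$ against the orthogonal matrix $\frac{1}{\sqrt{2}}\matr{1}{1}{1}{-1}$ yields a matrix whose Gram matrix has trace $3/2$ and determinant $1/4$, so its singular values are $\sqrt{(3 \pm \sqrt{5})/4}$, the larger of which exceeds $1$.

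The only step that requires any thought is the combinatorial reformulation of blockiness in terms of row supports; the rest --- the block-diagonal decomposition, the monotonicity of $\normmultiplier{\cdot}$ under restriction, and the $2 \times 2$ singular-value computation --- consists of standard manipulations with the operator norm.
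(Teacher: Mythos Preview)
Your proof is correct and follows essentially the same route as the paper: reduce blockiness to a forbidden $2\times 2$ pattern (you phrase this via row supports, the paper via ``no $2\times 2$ submatrix with exactly three $1$s'', but these are trivially equivalent), then invoke monotonicity of $\normmultiplier{\cdot}$ under restriction and an explicit computation showing $\normmultiplier{\matr1110}>1$. The only cosmetic difference is that the paper quotes the exact value $2/\sqrt3$ from Livshits, whereas your Hadamard test matrix gives the slightly weaker bound $\sqrt{(3+\sqrt5)/4}\approx1.144$, which is still enough.
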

\begin{proof}
It is straightforward to verify that the Schur multiplier norm of every non-zero blocky matrix is $1$. To prove the converse, we must show that a boolean matrix with $\normmultiplier A \le 1$ does not contain a $2\times 2$ submatrix with exactly three $1$-entries. A direct calculation, due to L.~Livshits~\cite{liv95},
shows that $\normmultiplier{\matr1011} = 2/\sqrt 3 > 1$,
and the Schur multiplier norm does not increase upon restriction to a submatrix.
\end{proof}

Consider the algebra of functions $f: G\to \CC$ under addition and pointwise multiplication. This is a Banach algebra under the norm $\normalg{\cdot}$, since $\normalg{fg} \le \normalg{f} \cdot \normalg{g}$. The idempotent elements of this algebra are precisely the boolean functions $f: G\to \{0,1\}$, and
\Cref{propcontractive} is the matrix analogue of Kawada and It\^o's theorem that a function is idempotent and contractive if and only if it is the indicator of a coset.

In the same way that Cohen's idempotent theorem shows that any idempotent element of the Fourier algebra can be written as a finite sum of contractive idempotents, it is an open problem, first stated (to our knowledge) in~\cite[Section 2]{kp05}, whether any idempotent Schur multiplier can be written as a finite sum of contractive idempotents in the algebra of Schur multipliers. This question is considered one of the challenging open problems in the area (see~\cite{MR2777487} and~\cite[Question 3.13]{elt16}). A straightforward compactness argument, presented in \cite[Theorem~3.10]{hhh2023}, shows that a positive resolution to this problem, while only being meaningful for infinite matrices, is equivalent to the following conjecture for finite matrices.

\begin{conjecture}[{\rm\cite{kp05};} {\rm\cite{hhh2023},} Conjecture {\small III}\/]\label{conjblocky}
Suppose that $A$ is a finite boolean matrix
with $\normmultiplier{A} \le \gamma$. Then we may write
\begin{equation}A = \sum_{i=1}^L \sigma_i B_i,\end{equation}
where the $\sigma_i$ are signs, the $B_i$ are blocky matrices, and $L$ depends only on $\gamma$.
\end{conjecture}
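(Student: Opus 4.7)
My plan is to pursue a matrix analogue of Sanders's iterative proof of the quantitative Cohen idempotent theorem. At each stage, starting from a matrix $A$ with $\normmultiplier{A} \le \gamma$, one subtracts off a signed sum of at most $k(\gamma)$ blocky matrices chosen so that the (no longer boolean) remainder has Schur multiplier norm at most $\gamma - c(\gamma)$ for some $c(\gamma) > 0$ that is independent of the dimension $n$. Iterating this peeling $O_\gamma(1)$ times drives the multiplier norm below $1$, at which point Proposition~\ref{propcontractive} forces a pure blocky expression of bounded size for what remains. The final block count $L$ is then controlled by the product of the per-stage counts, a quantity depending on $\gamma$ alone.

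The crucial ingredient is a matrix Bogolyubov--Ruzsa-type lemma asserting that every boolean $A$ with $\normmultiplier{A} \le \gamma$ admits a structured ``core'' expressible as $\sum_{i=1}^{k(\gamma)} \sigma_i B_i$ with blocky $B_i$ and $\normmultiplier{A - \sum_i \sigma_i B_i} \le \gamma - c(\gamma)$. I would try to extract this core from a $\gamma_2$-factorization $A = X Y^\T$ with $\max_i \norm{X_i}_2, \max_j \norm{Y_j}_2 \le \sqrt\gamma$, by clustering the row vectors $\{X_i\}$ and column vectors $\{Y_j\}$ into a bounded number of groups on each of which the inner products $\langle X_i, Y_j\rangle$ are nearly constant. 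The coarsening of $A$ across these clusters is blocky after rounding, and a Croot--Sisask-style almost-periodicity argument for the row and column vectors should, in principle, control the clustering size independently of $n$.

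The principal obstacle is making this clustering dimension-free. The paper's polylog$(n)$ bound stems from settling for relative-size increments rather than norm increments: one peels off a blocky piece covering a fixed fraction of the support at each step, which inherently takes $\Omega(\log n)$ iterations to exhaust. Upgrading to a Schur multiplier norm decrement requires a genuinely structural extraction, a non-commutative analogue of Bogolyubov's statement that $A+A-A-A$ contains a Bohr set of bounded rank, with ``rank'' replaced by ``block complexity.'' I expect the recent polynomial Freiman--Ruzsa theorem of Gowers, Green, Manners, and Tao, together with techniques from operator space theory, to play a role, but transferring these tools to the non-commutative Schur multiplier setting likely requires essentially new ideas and is where the real difficulty lies.
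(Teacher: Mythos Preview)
The statement you are attempting is \Cref{conjblocky}, which is an \emph{open conjecture}; the paper does not prove it, and only establishes the weaker \Cref{thmmain} with a $(\log n)^2$ loss. What you have written is a research programme rather than a proof, and you yourself concede in the final paragraph that the crucial step ``likely requires essentially new ideas.'' So there is no proof here to compare to the paper's, but let me flag two concrete places where the outline breaks down beyond the obstacle you already identify.

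First, the termination step is miscited: after subtracting signed blocky matrices the remainder is integer-valued, not boolean, so \Cref{propcontractive} does not apply. The salvage is easy (an integer matrix with $\normgamma{\cdot}<1$ has $\normmax{\cdot}<1$ and hence vanishes), but it is worth stating correctly. Second, and more seriously, your proposed mechanism for the ``matrix Bogolyubov--Ruzsa lemma'' cannot work as described. Clustering the rows $\{X_i\}$ and columns $\{Y_j\}$ into a bounded number $k=k(\gamma)$ of groups on each of which $\langle X_i,Y_j\rangle$ is nearly constant would produce an approximation of rank at most $k^2$. But blocky matrices need not have small rank: the $n\times n$ identity has $\normgamma{I_n}=1$ yet rank $n$, so no bounded-size clustering of its factorization vectors can make the inner products nearly constant on products of clusters. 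The structure you need to extract is \emph{blocky}, not low-rank, and these are orthogonal notions. Relatedly, Croot--Sisask almost-periodicity and the Bogolyubov--Ruzsa and polynomial Freiman--Ruzsa machinery all rest on translation invariance in a group; a generic boolean matrix has no such symmetry, which is exactly why the passage from the Green--Sanders theorem (which \emph{does} settle \Cref{conjblocky} for matrices of the form $M_f(x,y)=f(y^{-1}x)$) to arbitrary matrices is the whole difficulty. Invoking these tools by name does not bridge that gap.
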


\Cref{conjblocky} is closely related to the quantitative version of Cohen's idempotent theorem. Given a finite group $G$ and a function $f:G \to \CC$, define the matrix $M_f :  G\times G \to \CC$ by setting $M_f(x,y) = f(y^{-1}x)$. We have (see, e.g., \cite[Corollary~3.13]{hhh2023}) 
\begin{equation}\normmultiplier{M_f} = \normalg{f}.\label{eqgammaspectral}
\end{equation}
Therefore, it easily follows from the Green--Sanders theorem (and its extension to nonabelian groups by Sanders) that \Cref{conjblocky} is true for convolution matrices, i.e., those of the form $M_f(x,y) = f(y^{-1}x)$ for some boolean function $f:G \to \{0,1\}$.

We define the \emph{block complexity} $\block(A)$ of an $m\times n$ (integer) matrix $A$ to be the smallest integer $L$ such that there exist blocky matrices $B_1, \ldots, B_L$ and signs $\sigma_1, \ldots, \sigma_L$ with $A = \sum_{i=1}^L \sigma_i B_i$. 
D.~Avraham and A.~Yehudayoff~\cite{avrahamyehudayoff} proved that the block complexity of a random $n \times n$ boolean matrix is at least $n/\bigl(4 \log_2(2n)\bigr)$ with high probability. 

It is immediate from the definition of the block complexity that 
\[\normmultiplier{A} \le \block(A).\] 
\Cref{conjblocky} claims that, conversely, the block complexity can be bounded by a function of the Schur multiplier norm. In this paper, we prove that any boolean matrix with bounded Schur multiplier norm has block complexity at most polylogarithmic in its dimension.
All logarithms in this paper are understood to be natural, unless otherwise noted.

\begin{theorem}[Main theorem]\label{thmmain}
Let $A$ be an $m\times n$ integer matrix with $\normmultiplier{A} \le \gamma$. Denoting $k=\min\{m,n\}$, the block complexity of $A$ satisfies
\begin{equation}\block(A) \le 2^{O(\gamma^7)} \log\bigl(k\bigr)^2.\end{equation}
\end{theorem}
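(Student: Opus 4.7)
Since $\normmultiplier{M} \ge \max_{i,j}|M_{i,j}|$ (applying $M$ as a Schur multiplier to the rank-one unit $e_ie_j^{\T}$), the integer matrix $A$ has entries in $\{-\gamma,\ldots,\gamma\}$. I would decompose $A$ as a signed sum of at most $2\gamma$ level indicators $\one_{A\ge k}$ and $\one_{A\le -k}$, and argue that each such boolean matrix inherits a Schur multiplier norm polynomial in $\gamma$, either via a polynomial approximation of the threshold function applied to the $\gamma_2$ factorization, or by a direct truncation-and-rounding argument. This reduces the theorem to the case where $A$ is boolean, with $\gamma$ replaced by $\mathrm{poly}(\gamma)$, which is absorbed in the exponent of the final bound.

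\textbf{Step 2: $\gamma_2$ factorization and dimension reduction.} The Schur multiplier norm of a matrix coincides with its $\gamma_2$ factorization norm, so $A(i,j)=\langle u_i,v_j\rangle$ in a Hilbert space with $\|u_i\|\cdot\|v_j\|\le \gamma$. Applying a Johnson--Lindenstrauss projection yields vectors $\tilde u_i,\tilde v_j\in\RR^d$ with $d = O(\gamma^2\log k)$ and $\bigl|\langle \tilde u_i,\tilde v_j\rangle - A(i,j)\bigr|\le 1/4$. Rounding recovers the $0/1$ entries of $A$ from a low-dimensional geometric configuration, and this projection step contributes the first of the two $\log k$ factors.

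\textbf{Step 3: peeling blocky pieces.} In dimension $d$, the $1$-entries of $A$ are captured by a halfspace-like structure of bounded VC-dimension. I would iteratively locate a blocky submatrix $B_i$ covering a $2^{-\mathrm{poly}(\gamma)}$-fraction of the remaining $1$-entries (via a large-rectangle lemma for matrices of small $\gamma_2$ norm, in the spirit of Linial--Shraibman), followed by a packing step that groups many pairwise row- and column-disjoint rectangles into a single blocky summand. Subtracting $B_i$, I would recurse; since the residual has at most $k^2$ remaining $1$-entries and each peel removes a $2^{-\mathrm{poly}(\gamma)}$ fraction, the recursion depth is $\mathrm{poly}(\gamma)\cdot\log k$, producing the second $\log k$ factor and the $2^{\mathrm{poly}(\gamma)}$-per-level overhead.

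\textbf{Main obstacle.} The decisive step is the blocky (not merely rank-one) rectangle-finding lemma. Locating a single large monochromatic combinatorial rectangle from $\gamma_2$ structure is classical, but the definition of block complexity requires packing many such rectangles with \emph{pairwise disjoint} row- and column-supports into each summand; otherwise one only bounds the rank-one cover number, which is strictly weaker. This is the natural entry point for a Green--Sanders-type quantitative argument applied to the group structure induced by the low-dimensional factorization, and the $\gamma^7$ exponent is most transparently read as a composition of Sanders's $\gamma^{3+o(1)}$ bound with the dimension factor $\gamma^2$ from the factorization and some losses from the approximation and iteration. A secondary technical challenge is showing that the Schur multiplier norm of the residual matrix does not inflate across iterations, which likely requires a stability estimate for $\|\cdot\|_{\gamma_2}$ under subtraction of blocky matrices.
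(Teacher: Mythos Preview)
Your proposal has a genuine structural gap in Step~3, and the overall architecture differs fundamentally from the paper's.

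The central problem is the iteration. You propose peeling off a blocky matrix $B_i$ and recursing on $A - B_i$, hoping the residual still has controlled $\gamma_2$ norm. But the only general bound available is $\normgamma{A - B_i} \le \normgamma{A} + 1$, and after $L$ rounds this accumulates to $\gamma + L$; your acknowledged ``secondary technical challenge'' --- a stability estimate for $\normgamma{\cdot}$ under subtraction of blocky matrices --- is in fact the primary obstacle, and no such estimate exists in the direction you need. The paper circumvents this entirely: rather than subtracting blocky matrices, it subtracts a \emph{real-valued} matrix $A'$ built from averages of the column vectors $v_y$ in a fixed factorization $A = UV$. By \Cref{lemsubtractaverage}, this guarantees $\normgamma{A - A'}^2 \le \gamma^2 - 1/8$, enabling an induction on $\gamma$ (\Cref{thmgeneralmain}) rather than on the number of uncovered $1$-entries. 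The price is that $A - A'$ is no longer integer-valued, which forces the entire argument to be run in the class of $\eps$-almost integer-valued real matrices with $\eps = 2^{-20\gamma^2}$; this is why your Step~1 reduction to the boolean case is not merely unnecessary but counterproductive (and your claim that level indicators $\one_{A\ge k}$ have $\gamma_2$ norm polynomial in $\gamma$ is itself unjustified --- Lagrange interpolation gives only an $\exp\bigl(O(\gamma\log\gamma)\bigr)$ bound).

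Your reading of the $\gamma^7$ exponent is also off: the paper invokes neither Green--Sanders nor Johnson--Lindenstrauss nor any coset structure. Instead it introduces an $\alpha$-\emph{weighted Littlestone dimension} for real matrices, proves $\Ldim_{1/8}(A) = O(\gamma^4)$ via a perceptron argument (\Cref{propldimalphabound}), and uses this, inside each part of a greedy column partition (\Cref{lemfirstpartition}), to find a large column subset on which $A$ is nearly constant along each row (\Cref{propalphasubset}). The $\gamma^7$ arises as $\gamma^2$ (from the choice of $\eps$) times $\gamma^5$ (from the Littlestone exponent in \Cref{propalphasubset}); the two $\log k$ factors come respectively from the harmonic-sum bound in \Cref{lemfirstpartition} and from iterating the column-discarding step \eqref{eqloselog} inside \Cref{lemkey}.
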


Note that, similar to Cohen's idempotent theorem, \cref{thmmain} holds more generally for integer matrices. 

\paragraph{The factorization norm.}\hskip-0.3em
In the proof of \Cref{thmmain}, we will mainly work with Grothen\-dieck's characterization of Schur multipliers through the $\gamma_2$ factorization norm.
The \emph{$\gamma_2$ factorization norm} (or the \emph{$\gamma_2$ norm} for short) of a complex matrix $A$ is defined as
\begin{equation}\normgamma A = \min_{UV = A} \normrow U \normcol V,\end{equation}
where the minimum is taken over all factorizations $UV$ of $A$,
$\normrow U$ is the largest $\ell^2$-norm of a row in $U$ and
$\normcol V$ is the largest $\ell^2$-norm of a column in $V$.
Note that the $\gamma_2$ norm cannot increase by restricting to a submatrix, since removing rows of $U$ or columns of $V$ cannot increase $\normrow U$ or $\normcol V$. In particular, $\normgamma A$ is at least $\normmax A = \max_{(x,y)\in X\times Y} \bigl|A(x,y)\bigr|$.

The factorization norm is germane to the study of the Schur multipliers, because it turns out that $\normmultiplier{A} = \normgamma A$ for every matrix $A$; this result is due to A.~Grothen\-dieck~\cite{grothendieck} (see also~\cite{pisier2001} for a proof in a more modern context). 



\paragraph{Definitions and notation.}\hskip-0.3em%
We shall often find it convenient to regard a matrix as a function with a set $X\times Y$ as its domain. In the rest of the paper, $X$ and $Y$ will be assumed finite unless otherwise stated, and we reserve in advance the variables $m = |X|$ and $n=|Y|$. Then, when we write $A\in \ZZ^{X\times Y}$ (for instance), we mean that $A$ is an $m\times n$ integer matrix with rows indexed by $X$ and columns indexed by $Y$.
If $A$ is a matrix with domain $X\times Y$ and let $X'\subseteq X$ and $Y'\subseteq Y$, we may write $A_{X'\times Y'}$ for the matrix on the domain $X'\times Y'$ with $A_{X'\times Y'}(x,y) = A(x,y)$ for all $x\in X'$ and $y\in Y'$.

Now suppose that $A$ is a complex matrix with $\normgamma A\le \gamma$.
By rescaling the vectors given by the definition of factorization norm, we may express $A$ as a product $A = UV$ for some matrices $U$ and $V$ such that
\begin{equation*}\max\bigl( \norm{u_1}_2,\ldots,\norm{u_m}_2\bigr) \le 1\end{equation*}
and
\begin{equation*}\max\bigl(\norm{v_1}_2,\ldots,\norm{v_n}_2\bigr) \le \gamma,\end{equation*}
where $u_1,\ldots, u_m$ are the rows of $U$ and $v_1,\ldots, v_n$ are the columns of $V$. We shall call such a factorization of $A$ a \emph{$\gamma$-factorization}. If $U$ is $m\times t$ and $V$ is $t\times n$ for some integer $t$, note that the set $\{u_1, \ldots, u_m, v_1, \ldots, v_n\}$ is contained in a $t'$-dimensional subspace of $\CC^t$ for some $t'\le m+n$. So, without loss of generality, we always assume that all the vectors in a $\gamma$-factorization of an $m\times n$ matrix belong to $\CC^{m+n}$.

\section{The Littlestone dimension}

The Littlestone dimension is a combinatorial parameter of sign matrices that characterizes regret bounds in online learning~\cite{littlestone1988}. Towards our development of a generalization of the Littlestone dimension for real matrices, we devote this section to recalling some properties of the ordinary Littlestone dimension.

We begin with the definition of the \emph{Vapnik--Chervonenkis (VC) dimension}. The VC dimension of a sign matrix $A \in \{-1,1\}^{X\times Y}$ is the size of the largest subset $X'$ of $X$ with the following property: for every $b : X'\to \{-1,1\}$, there exists a column $y$ such that $A(x,y) = b(x)$ for all $x\in X'$. When this condition holds, we say that $A$ shatters the set $X'$.

The Littlestone dimension relaxes this definition by shattering decision trees instead of sets. A \emph{mistake tree} of depth $d$ over a domain $X$ is a complete binary tree of depth $d$ in which
\begin{itemize}\setlength\itemsep{-2pt}
\item[i)] each internal node $\nu$ is labelled with an element $x(\nu)\in X$; and
\item[ii)] each edge $e$ is labelled with a sign $\sigma(e)\in \{-1,1\}$, where $\sigma(e) = -1$ indicates a left child and $\sigma(e) = 1$ indicates a right child.
\end{itemize}

For the purposes of the Littlestone dimension, a matrix $A \in \{-1,1\}^{X\times Y}$ is said to \emph{shatter} a mistake tree over $X$ if, for every root-to-leaf path $(\nu_1,\ldots, \nu_{d+1})$, there exists a column $y\in Y$ such that $A\bigl( x(\nu_i), y\bigr) = \sigma(\nu_i \nu_{i+1})$ for all $i\in [d]$. Then the \emph{Littlestone dimension} of $A$, denoted by $\Ldim(A)$, is the largest integer $d$ for which there exists a mistake tree of depth $d$ that is shattered by $A$. The Littlestone dimension is at least the VC dimension, since every shattered set $X' = \{x_1,\ldots,x_d\}$ gives rise to a shattered mistake tree of depth $d$ where all nodes at level $i$ are labelled with $x_i$.

For any sign matrix $A$ with $\Ldim(A) = d$ and any $0<\eps<1/2$, one can find a fixed sign vector that matches an $\eps^d$ proportion of the columns of $A$, with error at most $\eps$.

\begin{proposition}\label{propsubset}
Let $A\in \{-1,1\}^{X\times Y}$ be a matrix with $\Ldim(A) = d$ and let $\eps\in (0,1/2)$. There exists a function $\sigma : X\to \{-1,1\}$ and some subset $S\subseteq Y$ with $|S|\ge \eps^d|Y|$ such that
\begin{equation}\Pr_{y\in S} \bigl[ A(x,y) \ne \sigma(x)\bigr] \le \eps\end{equation}
for all $x\in X$.
\end{proposition}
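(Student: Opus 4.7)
The plan is to prove the proposition by induction on $d = \Ldim(A)$. For the base case $d = 0$, the absence of any shattered mistake tree of depth one forces each row of $A$ to be constant on $Y$, so $\sigma(x) := A(x, y)$ for any (hence every) $y \in Y$, together with $S := Y$, gives zero error.

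For the inductive step $d \geq 1$, I would split on whether some row $x^* \in X$ is \emph{balanced}, meaning $\min(|Y_{x^*}^+|, |Y_{x^*}^-|) \geq \eps |Y|$, where $Y_x^{\pm} := \{y \in Y : A(x,y) = \pm 1\}$. If no balanced row exists, then for every $x$ the majority side contains more than $(1 - \eps)|Y|$ columns; defining $\sigma(x)$ to be the sign of that majority and taking $S := Y$ gives at most $\eps$ disagreement rate on every row, and $|S| = |Y| \geq \eps^d |Y|$, so the conclusion holds.

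If some balanced $x^*$ does exist, I would invoke the recursive characterization of Littlestone dimension: since there is no shattered depth-$(d+1)$ mistake tree with $x^*$ at its root, at least one of $\Ldim(A|_{Y_{x^*}^+})$, $\Ldim(A|_{Y_{x^*}^-})$ is at most $d - 1$. Pick $\epsilon^* \in \{+, -\}$ achieving this; balance guarantees $|Y_{x^*}^{\epsilon^*}| \geq \eps |Y|$. Apply the inductive hypothesis to $A$ restricted to columns $Y_{x^*}^{\epsilon^*}$ (the rows are still indexed by all of $X$) to obtain a labeling $\sigma^*$ and $S^* \subseteq Y_{x^*}^{\epsilon^*}$ with $|S^*| \geq \eps^{d-1} |Y_{x^*}^{\epsilon^*}| \geq \eps^d |Y|$ satisfying the error bound. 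Overriding $\sigma^*(x^*) := \epsilon^*$ then yields the desired $\sigma$, since every $y \in S^* \subseteq Y_{x^*}^{\epsilon^*}$ automatically satisfies $A(x^*, y) = \epsilon^*$, driving the error on row $x^*$ to zero while leaving the errors on all other rows unchanged.

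The main obstacle is identifying the right dichotomy; once one realizes that the balanced-row case is precisely what lets the Littlestone dimension drop \emph{while} preserving an $\eps$-fraction of the columns, everything else is routine. The recursive characterization of $\Ldim$ invoked above is immediate from the definition given in the excerpt: a shattered depth-$(d+1)$ mistake tree is exactly a root label $x$ together with two shattered depth-$d$ subtrees whose shattering witnesses lie in $Y_x^+$ and $Y_x^-$ respectively.
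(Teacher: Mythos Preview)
Your proposal is correct and follows essentially the same approach as the paper's proof: both argue by induction on $d$, use majority vote when every row is sufficiently unbalanced, and otherwise restrict to the side of a balanced row on which the Littlestone dimension drops. Your override step $\sigma^*(x^*) := \epsilon^*$ is harmless but unnecessary, since on $S^* \subseteq Y_{x^*}^{\epsilon^*}$ row $x^*$ is constant equal to $\epsilon^*$, so the inductive hypothesis (with error $\le \eps < 1/2$) already forces $\sigma^*(x^*) = \epsilon^*$.
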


\begin{proof}
Let $A\in \{-1,1\}^{X\times Y}$ be a matrix, and let $x\in X$ be an arbitrary row of $A$. Let
\begin{equation*}Y_x^- = \{y\in Y : A(x,y) = -1\}\qquad\hbox{and}\qquad Y_x^+ = \{y\in Y : A(x,y) = 1\}.\end{equation*}
By the definition of the Littlestone dimension, we have
\begin{equation}\Ldim(A) = 1+ \min\bigl\{ \Ldim(A_{X\times Y_x^-}), \Ldim(A_{X\times Y_x^+})\bigr\},\end{equation}
which implies that the Littlestone dimension must drop on at least one of $A_{X\times Y_x^-}$ or $A_{X\times Y_x^+}$.

If $S=Y$ and $\sigma(x) = \sgn\bigl( \sum_{y\in S} A(x,y)\bigr)$ does not satisfy the condition, then there is a row $x\in X$ with
\begin{equation}\Pr_{y\in S} \bigl[ A(x,y) \ne \sigma(x)\bigr] > \eps,\end{equation}
which implies that each of $Y_x^-$ and $Y_x^+$ comprises at most a $(1-\eps)$ proportion of $Y$. By the claim of the previous paragraph, removing all the columns in one of these two sets decreases the Littlestone dimension by $1$.

The proposition now follows by induction on $d$, since when $d=0$, each row must either contain only $-1$s or only $1$s, which forces all columns to be identical.
\end{proof}

Next, we show that for sign matrices, the Littlestone dimension provides a lower bound for (the square of) the $\gamma_2$ norm. The following proposition essentially follows from the classic mistake-bound analysis~\cite{rosenblatt1958} of the perceptron algorithm~\cite{mp43}.

\begin{proposition}\label{propldimbound}
Every $A\in \{-1,1\}^{X\times Y}$ satisfies
\begin{equation}\normgamma A \ge \sqrt{\Ldim(A)}.\end{equation}
\end{proposition}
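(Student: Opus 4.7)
The plan is to reduce the inequality to the classical perceptron mistake bound. Fix an optimal factorization $A = UV$ with rows $u_x$ of $U$ and columns $v_y$ of $V$, so that $A(x,y) = \langle u_x, v_y\rangle$ and $\normrow{U} \cdot \normcol{V} = \normgamma{A}$; we may take the factorization to be real-valued by separating real and imaginary parts. Because $A$ is $\{-1,+1\}$-valued, every column $v_{y^\star}$ linearly separates the rows $\{u_x\}_{x\in X}$ under the labeling $x \mapsto A(x, y^\star)$ with margin exactly $1$, regardless of which column we pick.

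Next, I recall Novikoff's perceptron bound. Given a stream of examples $(z_t, b_t)$ with $\norm{z_t}_2 \le R$ and the guarantee that some fixed $w^\star$ of norm at most $W$ satisfies $b_t \langle w^\star, z_t\rangle \ge 1$ for all $t$, the perceptron algorithm, initialized at $w_1 = 0$ and updated by $w_{t+1} = w_t + b_t z_t$ on each mistake, makes at most $R^2 W^2$ mistakes in total. The standard proof compares $\norm{w_t}_2^2$, which grows by at most $R^2$ per mistake, with $\langle w_t, w^\star\rangle$, which grows by at least $1$ per mistake, and applies Cauchy--Schwarz.

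To realize this scenario, let $d = \Ldim(A)$ and fix a depth-$d$ mistake tree over $X$ that $A$ shatters. I drive the perceptron to err at every step down an adversarially chosen root-to-leaf path $(\nu_1, \ldots, \nu_{d+1})$: at step $t$, present the instance $z_t = u_{x(\nu_t)}$, read off the prediction $\sgn \langle w_t, z_t\rangle$, and descend along the child edge whose label $b_t$ is opposite to this prediction (breaking ties arbitrarily). Then $b_t \langle w_t, z_t\rangle \le 0$, so the perceptron errs and updates at every step. Upon reaching a leaf, the shattering property supplies a column $y^\star \in Y$ satisfying $A(x(\nu_t), y^\star) = b_t$ for every $t$; setting $w^\star = v_{y^\star}$ and invoking the first paragraph, the margin hypothesis of Novikoff's bound holds with $R = \normrow{U}$ and $W = \normcol{V}$. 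Novikoff's bound therefore gives $d \le R^2 W^2 = \normgamma{A}^2$, which is the proposition. The argument is nearly mechanical; the one place that needs care is aligning the perceptron's input sequence with the tree, but this is precisely what shattering a mistake tree is designed to provide.
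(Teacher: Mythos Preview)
Your proof is correct and is essentially the same as the paper's: the paper explicitly notes that the proposition ``essentially follows from the classic mistake-bound analysis of the perceptron algorithm,'' and then writes out the very potential-function calculation (adversarial path, $\norm{\sum \sigma_i u_{x(\nu_i)}}^2 \le d$, Cauchy--Schwarz against $v_y$) that you invoke as Novikoff's bound. The only cosmetic difference is that you quote Novikoff as a black box, whereas the paper unpacks the same two-line argument inline.
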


\begin{proof}
Let $\gamma = \normgamma A$ and $d = \Ldim(A)$.
Consider a $\gamma$-factorization $A = UV$ of $A$, so that $\norm{u_x} \le 1$ for all $x\in X$ and $\norm{v_y} \le \gamma$ for all $y\in Y$.  By the definition of the Littlestone dimension, $A$ shatters a mistake tree of depth $d$ over $X$.
We shall recursively construct a root-to-leaf path $(\nu_1, \ldots, \nu_{d+1})$ that furnishes our desired bound. Let $\nu_1$ be the root of the tree and set $\sigma_1 = 1$. Now for all $1\le k\le d$, let $\nu_{k+1}$ be the child of $\nu_k$ with $\sigma(\nu_k\nu_{k+1}) = \sigma_k$
and set
\begin{equation}\sigma_{k+1} = -\sgn \biggl\langle \sum_{i=1}^k \sigma_i u_{x(\nu_i)}, u_{x(\nu_{k+1})}\biggr\rangle.\end{equation}
For each $1\le k\le d-1$, we have
\begin{align}
\normm{\sum_{i=1}^{k+1} \sigma_i u_{x(\nu_i)}}^2 &=
\normm{\sum_{i=1}^k \sigma_i u_{x(\nu_i)}}^2 + \bignorm{u_{x(\nu_{k+1})}}^2 + 2\sigma_{k+1} \biggl\langle \sum_{i=1}^k \sigma_i u_{x(\nu_i)}, u_{x(v_{k+1})}\biggr\rangle \cr
&\le \normm{\sum_{i=1}^k \sigma_i u_{x(\nu_i)}}^2 + 1,
\end{align}
by our choice of $\sigma_{k+1}$, and consequently
\begin{equation}\normm{\sum_{i=1}^d \sigma_i u_{x(\nu_i)}}^2 \le d.\end{equation}
By assumption, $A$ shatters the mistake tree, so there is a column $y\in Y$ with $A\bigl(x(\nu_i),y\bigr) = \sigma_i$ for all $1\le i\le d$. Hence,
\begin{equation} d = \sum_{i=1}^d A\bigl( x(\nu_i), y\bigr) \sigma_i
= \biggl\langle \sum_{i=1}^d \sigma_i u_{x(\nu_i)}, v_y\biggr\rangle
\le \sqrt d \norm{v_y} \le \gamma \sqrt d\end{equation}
by the Cauchy--Schwarz inequality.
\end{proof}

\section{A weighted Littlestone dimension for real matrices}

Because the inductive step in our eventual proof of \Cref{thmmain} does not preserve the boolean entries of the starting matrix, we must develop a generalization of the Littlestone dimension for real matrices. This we shall do in the present section, proving analogues of Propositions~\ref{propsubset} and~\ref{propldimbound}.

Let $\alpha > 0$ be a parameter. A \emph{weighted mistake tree} of depth $d$ over a domain $X$ is a complete binary tree of depth $d$, in which each internal node $\nu$ is labelled by an element $x(\nu)\in X$, as well as
a real parameter $w(\nu)\in \RR$. We also fix a choice of left child and right child for each of these internal nodes.
We say that $A\in \RR^{X\times Y}$ \emph{$\alpha$-shatters} a weighted mistake tree if for every root-to-leaf path $(\nu_1,\ldots, \nu_{d+1})$, there exists a column $y\in Y$ such that for all $1\le i\le d$,
\begin{itemize}\setlength\itemsep{-2pt}
\item[i)] whenever $\nu_{i+1}$ is the left child of $v_i$,
we have $A\bigl(x(\nu_i), y\bigr) \ge w(\nu_i) + \alpha/2$; and
\item[ii)] whenever $\nu_{i+1}$ is the right child of $v_i$, we have $A\bigl(x(\nu_i), y\bigr) \le w(\nu_i) - \alpha/2$.
\end{itemize}
Note that if $A$ $\alpha$-shatters a weighted mistake tree, then for every node $\nu$ in the tree, we must have $\bigl|w(\nu)\bigr| \le \normmax A$.
We define the \emph{$\alpha$-weighted Littlestone dimension}, denoted by $\Ldim_\alpha(A)$, to be the largest $d$ such that there exists a weighted mistake tree of depth $d$ that is $\alpha$-shattered by $A$.

The following proposition shows that for all real matrices $A$ with small $\Ldim_\alpha(A)$, there is a function $g$ such that the entries in each row $x$ are mostly within $2\alpha$ of $g(x)$, after restricting to a large subset of columns.

\begin{proposition}\label{propalphasubset}
Let $\alpha$ and $\eps$ be positive parameters. Suppose that $A\in \RR^{X\times Y}$ is a matrix with $\Ldim_\alpha(A) = d$
and $\normmax A = M$. There exists a subset $S\subseteq Y$ with 
\begin{equation}|S|\ge |Y| \biggl(\frac{\eps}{\lceil 2M/\alpha\rceil}\biggr)^d\end{equation}
and a function $g : X\to [-M,M]$ such that for every $x\in X$,
\begin{equation} \Pr_{y \in S}\Bigl[\bigl| A(x,y) - g(x)\bigr|\ge 2\alpha \Bigr]\le\eps.\end{equation}
\end{proposition}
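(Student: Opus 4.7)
The plan is to induct on $d = \Ldim_\alpha(A)$, paralleling the structure of the proof of \Cref{propsubset}. For the base case $d = 0$, no depth-$1$ shattered $\alpha$-weighted mistake tree exists, so for every row $x$ all values $\{A(x,y)\}_{y \in Y}$ lie within an interval of diameter less than $\alpha$; we set $g(x)$ to its midpoint (clipped to $[-M, M]$) and $S = Y$, which gives $|A(x,y) - g(x)| < \alpha/2 < 2\alpha$ for every $(x, y)$ and $|S| = |Y|(\eps\alpha/(2M))^0$.

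For the inductive step ($d \geq 1$), I would first try the natural choice $g_0(x) =$ center of a length-$4\alpha$ interval in $[-M, M]$ maximizing the fraction of $y \in Y$ with $A(x, y)$ inside. If this succeeds at level $1 - \eps$ on every row, we take $g = g_0$ and $S = Y$; otherwise, some row $x_0$ has the property that no length-$4\alpha$ interval captures more than a $(1-\eps)$-fraction of its values --- we shall call such a row \emph{spread}. The core task is then to exhibit a subset $S' \subseteq Y$ satisfying $|S'| \geq (\eps\alpha/(2M))|Y|$ and $\Ldim_\alpha(A_{X \times S'}) \leq d - 1$; applying the inductive hypothesis to $A_{X \times S'}$ then yields $g$ and $S \subseteq S'$ with $|S| \geq (\eps\alpha/(2M))^{d-1}|S'| \geq (\eps\alpha/(2M))^d|Y|$.

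The key splitting property driving this choice is the following: for any row $x$ and any pair of values $v_1, v_2$ achieved in row $x$ with $|v_1 - v_2| \geq \alpha$, we have
\[\min\bigl\{\Ldim_\alpha(A_{X \times Y_{x, v_1}}),\ \Ldim_\alpha(A_{X \times Y_{x, v_2}})\bigr\} \leq d - 1,\]
where $Y_{x, v} := \{y \in Y : A(x, y) = v\}$. Indeed, if both restrictions had dimension $d$, attaching their shattered depth-$d$ trees under a fresh root labeled $x$ with edges $v_1, v_2$ would produce a shattered depth-$(d+1)$ tree, contradicting $\Ldim_\alpha(A) = d$. Declaring a value $v$ of row $x_0$ \emph{bad} if $\Ldim_\alpha(A_{X \times Y_{x_0, v}}) = d$ and \emph{good} otherwise, the splitting property forces any two bad values to satisfy $|v_1 - v_2| < \alpha$; hence the bad values occupy an interval of diameter less than $\alpha$, and the spread of row $x_0$ guarantees that the columns with good values in row $x_0$ have total mass at least $\eps|Y|$.

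The main obstacle I foresee is promoting this $\geq \eps|Y|$ good-value mass --- potentially spread across many distinct good values each of small individual multiplicity --- into a single subset $S'$ of size $\geq (\eps\alpha/(2M))|Y|$ whose restricted weighted Littlestone dimension is at most $d - 1$. A natural attempt is to partition the good values into $\lceil 2M/\alpha \rceil$ buckets of width $\alpha$ and take $S'$ to be the columns whose value in row $x_0$ lies in the densest such bucket; this has the correct size, and within a single width-$\alpha$ bucket the row $x_0$ cannot label any internal node of a shattered tree (sibling edges would need to differ by at least $\alpha$, but the bucket has width only $\alpha$). The remaining challenge is ruling out depth-$d$ shattered trees on $A_{X \times S'}$ built entirely from rows other than $x_0$, which I expect to require a more refined selection --- perhaps choosing $S' = Y_{x_0, v^*}$ for a carefully chosen good value $v^*$ of large multiplicity, so that the goodness of $v^*$ delivers the dimension drop directly while a pigeonhole/bucket argument secures the size bound.
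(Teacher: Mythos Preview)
Your outline is the paper's own argument: induct on $d$; in the base case take $S=Y$ with $g(x)$ any value in row $x$; in the inductive step, either every row already concentrates in some $4\alpha$-window (take $S=Y$), or some row $x_0$ is spread and one seeks $S'\subseteq Y$ of density at least $\eps\alpha/(2M)$ with $\Ldim_\alpha(A_{X\times S'})\le d-1$, then recurses.

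The paper does not pass to a single exact level set as you contemplate at the end. It buckets the values of $A(x_0,\cdot)$ into width-$\alpha$ intervals $S_1,S_2,\ldots$, takes the densest bucket $S_i$ (size at least $\alpha|Y|/(2M)$) together with some bucket $S_j$ satisfying $|i-j|\ge 2$ and $|S_j|\ge\eps\alpha|Y|/(2M)$ (guaranteed by the spread hypothesis), and then simply asserts
\[
\Ldim_\alpha(A)\ \ge\ 1+\min\bigl\{\Ldim_\alpha(A_{X\times S_i}),\ \Ldim_\alpha(A_{X\times S_j})\bigr\},
\]
recursing on whichever bucket realizes the minimum.

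So the obstacle you isolate --- that a width-$\alpha$ bucket need not supply a single value to label the new root edge --- is precisely the step the paper takes without further justification. Your exact-value splitting property is correct, but it does not imply the bucket version above: with $\alpha=1$, rows $x_0,x_1$ and
\[
A(x_0,\cdot)=(0.1,\ 0.6,\ 2.1,\ 2.6),\qquad A(x_1,\cdot)=(0,\ 1,\ 0.5,\ 1.5),
\]
the two buckets $\{y_1,y_2\}$ and $\{y_3,y_4\}$ each have $\Ldim_1=1$ (witnessed by $x_1$), while $\Ldim_1(A)=1$ as well, since every row takes four distinct values and hence cannot sit at the root of a depth-$2$ tree. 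Your instinct that this point needs a more refined selection (or a threshold-style reformulation of the dimension) is well placed; as written, neither your sketch nor the paper's proof fully closes this step.
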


\begin{proof}
If for every $x\in X$ there is some $k\in \NN$ with
\begin{equation}\biggl|\Bigl\{ y\in Y : A(x,y) \in \bigl[-M + (k-2)\alpha, -M+ (k+2)\alpha\bigr] \Bigr\}\biggr| \ge (1-\eps)|Y|,\end{equation}
then we can set $g(x) = -M+k\alpha$ and $S=Y$ to prove the assertion. So, assume that there is some $x\in X$ for which such a $k$ does not exist. Letting
\begin{equation} S_i = \Bigl\{ y\in Y : A(x,y) \in \bigl[-M + (i-1)\alpha, -M+ i\alpha\bigr] \Bigr\},\end{equation}
this assumption can be re-expressed as
\begin{equation}|S_{k-1} \cup S_k \cup S_{k+1} \cup S_{k+2}| < (1-\eps) |Y|\end{equation}
for all $k\in \NN$. 
Since $A(x,y)\in [-M,M]$ for all $y\in Y$, there must exist some $i\in \NN$ with $|S_i| \ge |Y| / \lceil 2M/\alpha \rceil$.
On the other hand, we know that
\begin{equation}|S_{i-1} \cup S_i \cup S_{i+1} \cup S_{i+2}| \le (1-\eps) |Y|,\end{equation}
so there must be some $j\in \NN$ with $|j-i|\ge 2$ such that
$|S_j| \ge \eps|Y| / \lceil 2M/\alpha \rceil$. Note that every $y\in S_i$ and $y'\in S_j$ satisfy $\bigl| A(x,y) - A(x,y')\bigr| \ge \alpha$, so we see that
\begin{equation}\Ldim_\alpha(A) \ge 1 + \min \bigl\{ \Ldim_\alpha(A_{X\times S_i}), \Ldim_\alpha(A_{X\times S_j})\bigr\}.\end{equation}
Hence we have $\Ldim_\alpha(A_{X\times S_i})\le d-1$
or $\Ldim_\alpha(A_{X\times S_j})\le d-1$, and may now perform induction on $d$.

When $d=0$, pick any $y_0\in Y$ and let $g(x) = A(x,y_0)$ for all $x\in X$. The fact that $A$ does not shatter any $\alpha$-weighted mistake tree of positive depth tells us that $A(x,y) \in \bigl( g(x) - \alpha, g(x) + \alpha)$ for all $(x,y)\in X\times Y$, and hence we may take $S=Y$. 
\end{proof}

Like the ordinary Littlestone dimension, our $\alpha$-weighted Littlestone dimension is boun\-ded polynomially in terms of the $\gamma_2$-norm (up to a constant factor that depends on $\alpha$).

\begin{proposition}\label{propldimalphabound}
Let $A\in \RR^{X\times Y}$ be a matrix with $\Ldim_\alpha(A) = d$ (for some $\alpha > 0$), $\normgamma A = \gamma$, and $\normmax A = M$. Then
\begin{equation} \gamma \ge \frac{\alpha\sqrt d}{2(M+1)} - 1,\end{equation}
and consequently
\begin{equation} d = O_\alpha(\gamma^4).\end{equation}
\end{proposition}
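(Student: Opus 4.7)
The strategy is to adapt the perceptron argument of Proposition \ref{propldimbound} to the weighted setting. Without sign-valued labels on the edges, the standard construction no longer produces a partial sum whose inner product with $v_y$ scales like $d$; to restore this, I would augment each row vector of a $\gamma$-factorization by a single extra coordinate equal to the midpoint of its node's two sibling weights, producing a margin of $\alpha/2$ that Cauchy--Schwarz can convert into the desired bound.

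Concretely, fix a $\gamma$-factorization $A = UV$ and a depth-$d$ $\alpha$-weighted mistake tree shattered by $A$. For each internal node $\nu$, relabel its outgoing edges so the weights satisfy $w_\nu^+ - w_\nu^- \ge \alpha$, and set $\mu_\nu = (w_\nu^+ + w_\nu^-)/2$, which satisfies $|\mu_\nu| \le M$. Form augmented row vectors $\tilde u_\nu = (u_{x(\nu)}, \mu_\nu)$ with $\norm{\tilde u_\nu}^2 \le 1 + M^2$, and augmented column vectors $\tilde v_y = (v_y, -1)$ with $\norm{\tilde v_y}^2 \le \gamma^2 + 1$; by construction $\langle \tilde u_\nu, \tilde v_y\rangle = A(x(\nu), y) - \mu_\nu$.

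Next, build the path and signs greedily as in Proposition \ref{propldimbound}. Setting $\tilde z_k = \sum_{i\le k}\sigma_i \tilde u_{\nu_i}$, at step $k$ choose $\sigma_k = -\sgn\langle \tilde z_{k-1}, \tilde u_{\nu_k}\rangle$ and take $\nu_{k+1}$ to be the $\sigma_k$-child of $\nu_k$. The usual perceptron telescoping yields $\norm{\tilde z_d}^2 \le d(1 + M^2)$. By the shattering hypothesis, there exists $y\in Y$ with $A(x(\nu_k), y) = w_{\nu_k}^{\sigma_k}$ for all $k$, and a direct computation gives $\sigma_k \langle \tilde u_{\nu_k}, \tilde v_y\rangle = (w_{\nu_k}^+ - w_{\nu_k}^-)/2 \ge \alpha/2$ regardless of the value of $\sigma_k$, so $\langle \tilde z_d, \tilde v_y\rangle \ge d\alpha/2$. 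Cauchy--Schwarz now gives $d\alpha/2 \le \sqrt{d(1+M^2)(\gamma^2+1)}$, and applying the elementary inequalities $\sqrt{1+M^2} \le M+1$ and $\sqrt{\gamma^2+1} \le \gamma+1$ produces $\gamma \ge \alpha\sqrt{d}/(2(M+1)) - 1$. The asymptotic consequence $d = O_\alpha(\gamma^4)$ then follows by substituting $M \le \normmax A \le \normgamma A = \gamma$ and rearranging.

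The main obstacle I anticipate is identifying the right augmentation: naively running the perceptron on the original $\gamma$-factorization fails because there is no longer a clean choice of sign $\sigma_k$ that simultaneously serves the perceptron update and tracks the weight $w_k$. Centering each shattered value at the sibling midpoint $\mu_\nu$ symmetrizes the two sibling weights to $\pm(w_\nu^+ - w_\nu^-)/2$ and exposes the $\alpha$-gap; once this is seen, the rest is a near-verbatim copy of Proposition \ref{propldimbound}, with the extra coordinate contributing at most $\sqrt{1+M^2}$ to the row-norm and $\sqrt{\gamma^2+1}$ to the column-norm of the augmented factorization.
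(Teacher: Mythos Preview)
Your proposal is correct and follows essentially the same approach as the paper: both augment the row vectors of a $\gamma$-factorization by the midpoint $\mu_\nu$ of the two sibling edge weights, run the perceptron on the augmented vectors along a greedily chosen root-to-leaf path, and combine the resulting $\sqrt d(M+1)$ bound on the partial sum with the $\alpha/2$-margin from the centered inner product via Cauchy--Schwarz. Your indexing is in fact cleaner than the paper's (you define $\sigma_k$ before $\nu_{k+1}$, avoiding an apparent circularity), and your norm bound $\norm{\tilde u_\nu}^2 \le 1+M^2$ is the precise form that the paper's inequality $\norm{\tilde u(\nu)} \le M+1$ implicitly rests on.
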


\begin{proof}
Consider a $\gamma$-factorization $A = UV$, where $U$ has rows $u_x$ indexed by $x\in X$ and $V$ has columns $v_y$ indexed by $y\in Y$. Fix a weighted mistake tree of depth $d$ that is $\alpha$-shattered by $A$.
For each internal node $\nu$ in the tree, we define
\begin{equation}\tilde u(\nu) = u_{x(\nu)} \oplus w(\nu)\end{equation}
(where, for a vector $u\in \RR^t$ and a real number $\beta$,
$u\oplus \beta$ denotes the vector in $\RR^{t+1}$ obtained by appending $\beta$ to $u$). Note that
\[\norm{\tilde u(\nu)} \le \norm{u_{x(\nu)}} + \normmax{A} \le M+1.\]

As in the proof of \Cref{propldimbound}, we recursively define a root-to-leaf path that supplies our final bound.
For the base case, let $\nu_1$ denote the root of the tree and set $\sigma_1 = 1$. Now suppose that $\nu_1,\ldots,\nu_k$ and $\sigma_1,\ldots,\sigma_k$ are defined, for some $1\le k\le d$.
We set
\begin{equation}
\label{eq:sigma_def}
\sigma_{k+1} = -\sgn \biggl\langle \sum_{i=1}^k \sigma_i \tilde u(\nu_i), \tilde u(\nu_{k+1})\biggr\rangle;\end{equation}
if $\sigma_{k+1} = 1$, let $\nu_{i+1}$ be the left child of
$\nu_i$, and if $\sigma_{k+1} = -1$, let $\nu_{i+1}$ be the right child of $\nu_i$.
By the definition of $\sigma_{k+1}$, we have
\begin{align*}
\normm{\sum_{i=1}^{k+1} \sigma_i \tilde u(\nu_{k+1})}^2 &=
\normm{\sum_{i=1}^k \sigma_i \tilde u(\nu_i)}^2 + \bignorm{\tilde u(\nu_i)}^2 + 2\sigma_{k+1} \biggl\langle \sum_{i=1}^k \sigma_i \tilde u(\nu_i), \tilde u(\nu_{k+1})\biggr\rangle \cr
&\le \normm{\sum_{i=1}^k \sigma_i \tilde u(\nu_i)}^2 + \bignorm{\tilde u(\nu_{k+1})}^2
\end{align*}
for all $1\le k\le d-1$,
which shows that
\begin{equation}\normm{\sum_{i=1}^d \sigma_i \tilde u(\nu_i)}^2
\le \sum_{i=1}^d \bignorm{\tilde u(\nu_i)}^2 \le d(M+1)^2.\end{equation}
On the other hand, since $A$ $\alpha$-shatters the tree, there exists a column $y\in Y$ such that
\begin{equation}\sigma_i\bigl(\langle u_{x(\nu_i)}, v_y\rangle - w(v_i)\bigr)
=\sigma_i\bigl(A(x(\nu_i), y) - w(v_i)\bigr)
\ge \frac\alpha2.\end{equation}
for all $1\le i\le d$.
Letting $\tilde v = v_y \oplus -1$, we may now bound
\begin{equation}
\biggl\langle \sum_{i=1}^d \sigma_i \tilde u(\nu_i), \tilde v \biggr\rangle = \sum_{i=1}^d \sigma_i \Bigl( \bigl\langle u_{x(\nu_i)}, v_y\bigr\rangle - w(\nu_i)\Bigr) \ge
\frac{\alpha d}{2},
\end{equation}
and by the Cauchy--Schwarz inequality,
\begin{equation}\label{eq:csbound}\frac{\alpha d}{2} \le
\normm{\sum_{i=1}^d \sigma_i \tilde u(\nu_i)}\cdot
\bignorm{\tilde v} \le \sqrt d(M+1)(\gamma + 1).
\end{equation}
Hence
\begin{equation}\gamma \ge \frac{\alpha\sqrt d}{2(M+1)} - 1,\end{equation}
and we also have
$d = O(\gamma^4/\alpha^2)$, since $\normmax A \le \normgamma A$.
\end{proof}
\goodbreak

\section{Three lemmas}

In this section, we establish three miscellaneous lemmas. We begin with a simple upper bound on the block complexity of an integer matrix in terms of the $\ell_1$ norms of its rows.

\begin{lemma}\label{lemblockcomplexitybound}
If $A\in \ZZ^{X\times Y}$ contains only nonnegative entries, then
\begin{equation}\block(A) \le \max_{x\in X} \sum_{y\in Y} \bigl| A(x,y)\bigr|.\end{equation}
More generally, every $A\in \ZZ^{X\times Y}$ satisfies
\begin{equation}\block(A) \le 2 \max_{x\in X} \sum_{y\in Y} \bigl| A(x,y)\bigr|.\end{equation}
\end{lemma}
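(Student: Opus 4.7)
The plan is to prove the nonnegative case by induction on $M = \max_{x\in X} \sum_{y\in Y} A(x,y)$, and then deduce the general integer case by decomposing $A$ into its positive and negative parts.

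For the nonnegative induction, the base case $M = 0$ is immediate, since $A$ must be the zero matrix, which is blocky. For the inductive step, I would construct a single blocky matrix $B$ with $B \le A$ entrywise such that $A - B$ is again a nonnegative integer matrix with maximum row sum at most $M - 1$, and then apply the induction hypothesis to $A - B$. Concretely, let $X^* = \{x \in X : \sum_{y\in Y} A(x,y) = M\}$ be the set of saturated rows. For each $x \in X^*$, pick any $y(x) \in Y$ with $A(x, y(x)) \ge 1$ (which exists since the row sum is at least $1$). Define $B$ by $B(x, y(x)) = 1$ for all $x \in X^*$ and $B(x,y) = 0$ otherwise.

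The key observation is that this $B$ is blocky. Grouping rows by their chosen column, set $S_y = \{x \in X^* : y(x) = y\}$ and $T_y = \{y\}$ for each $y \in Y$. Then the $S_y$ are pairwise disjoint (each $x \in X^*$ has a unique $y(x)$), the $T_y$ are pairwise disjoint (being distinct singletons), and $\supp(B) = \bigcup_{y\in Y} S_y \times T_y$, meeting the definition of blocky. Moreover, $B \le A$ by construction, so $A - B$ is a nonnegative integer matrix whose row sums match those of $A$ for $x \notin X^*$ (and are hence already bounded by $M-1$) and drop by exactly $1$ at each $x \in X^*$. Thus $\max_x \sum_y (A-B)(x,y) \le M - 1$, and by induction $\block(A - B) \le M - 1$, whence $\block(A) \le 1 + (M-1) = M$.

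For the general integer case, I would decompose $A = A^+ - A^-$ where $A^+(x,y) = \max(A(x,y), 0)$ and $A^-(x,y) = \max(-A(x,y), 0)$ are both nonnegative integer matrices. The nonnegative bound gives $\block(A^\pm) \le \max_x \sum_y A^\pm(x,y) \le \max_x \sum_y |A(x,y)|$, and therefore $\block(A) \le \block(A^+) + \block(A^-) \le 2 \max_x \sum_y |A(x,y)|$. There is no real obstacle here; the only nontrivial ingredient is the recognition that any $0/1$ matrix with at most one $1$ per row is blocky, which is transparent upon partitioning its support column by column.
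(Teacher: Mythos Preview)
Your argument is correct and follows essentially the same approach as the paper's proof: peel off one blocky matrix (with at most one $1$ per row) so that the maximum row sum drops by one, then iterate, and handle the signed case by splitting $A=A^+-A^-$. The only cosmetic difference is that the paper subtracts a $1$ from every nonzero row at each step, whereas you subtract only from the rows currently achieving the maximum; both choices work for the same reason.
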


\begin{proof}
The second statement follows from the first by splitting a matrix $A$ into $A_+ - A_-$, where both $A_+$ and $A_-$ are nonnegative, and then observing that $\block(A) \le \block(A_+) + \block(A_-)$.

Hence, we may assume that all entries of $A$ are nonnegative.
Let $X'$ be the set of nonzero rows of $A$. For every nonzero row $x\in X'$, choose $y(x)\in Y$ such that $A\bigl(x,y(x)\bigr) \ne 0$. The set $\bigl\{ \bigl( x,y(x)\bigr) : x\in X'\bigr\}$ defines a blocky matrix $B\in \{0,1\}^{X\times Y}$. Furthermore, all the entries in $A' = A-B$ are nonnegative, and
\begin{equation}\max_{x\in X} \sum_{y\in Y} A'(x,y) = \max_{x\in X} \sum_{y\in Y} A(x,y) - 1.\end{equation}
This argument may be iterated until $A$ is the zero matrix.
\end{proof}

Next, we prove a simple technical lemma, showing that if some set of bounded vectors has a large average vector $\hat v$, then subtracting $\hat v$ from many of the individual vectors significantly reduces their norms.

\begin{lemma}\label{lemsubtractaverage}
Let $v_1, \ldots, v_r$ be vectors in a Hilbert space, and let $\hat v = \ex_{i\in [r]} v_i$ denote their average. If  $\norm{v_i} \le \gamma$ for all $1\le i\le r$, and $\norm{\hat v} = c$, then
\begin{equation}S = \bigl\{ i\in [r] : \norm{v_i-\hat v}^2 \le \norm{v_i}^2 - c^2/2\bigr\},\end{equation}
satisfies $|S| \ge c^2 r / (2\gamma^2)$, and for every $i\in S$,
\begin{equation}\norm{v_i - \hat v}^2 \le \norm{v_i}^2 - \frac{c^2}{2}.\end{equation}
\end{lemma}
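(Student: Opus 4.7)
\medskip
\noindent\textbf{Proof proposal.}\enspace My plan is to reduce the stated norm inequality to a simple one-sided inner-product inequality, and then to recover the conclusion by a one-line Markov-type averaging argument, exploiting the ceiling $|\langle v_i,\hat v\rangle|\le c\gamma$ supplied by Cauchy--Schwarz.

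First, I would expand
\[
\bignorm{v_i - \hat v}^2 = \bignorm{v_i}^2 - 2\langle v_i,\hat v\rangle + \bignorm{\hat v}^2 = \bignorm{v_i}^2 - 2\langle v_i,\hat v\rangle + c^2,
\]
so that the inequality $\norm{v_i-\hat v}^2 \le \norm{v_i}^2 - c^2/2$ is equivalent to
\[
\langle v_i,\hat v\rangle \ge \tfrac{3}{4}c^2.
\]
I would therefore \emph{define} $S = \{\, i\in[r] : \langle v_i,\hat v\rangle \ge 3c^2/4\,\}$ and spend the rest of the argument lower-bounding $|S|$. (The case $c=0$ is trivial, so assume $c>0$; note also that $c=\norm{\hat v}\le r^{-1}\sum \norm{v_i}\le \gamma$.)

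Next comes the averaging step. By linearity,
\[
\frac{1}{r}\sum_{i=1}^r \langle v_i,\hat v\rangle = \langle \hat v,\hat v\rangle = c^2.
\]
For $i\in S$, Cauchy--Schwarz gives $\langle v_i,\hat v\rangle \le \norm{v_i}\norm{\hat v}\le \gamma c$; for $i\notin S$, we have $\langle v_i,\hat v\rangle < 3c^2/4$. Splitting the sum accordingly yields
\[
c^2 r \;\le\; |S|\,\gamma c + (r-|S|)\,\tfrac{3}{4}c^2,
\]
and rearranging gives $|S|\ge cr/(4\gamma - 3c)$.

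The final step is to observe that this lower bound already dominates the target $c^2 r/(2\gamma^2)$: this is equivalent to $2\gamma^2 \ge c(4\gamma - 3c)$, i.e., to $2\gamma^2 - 4c\gamma + 3c^2 \ge 0$, which I would verify via the identity
\[
2\gamma^2 - 4c\gamma + 3c^2 = 2(\gamma - c)^2 + c^2 \ge c^2 > 0.
\]
There is no real obstacle in this proof; the only mild subtlety is choosing the correct truncation threshold ($3c^2/4$ rather than, say, $c^2/2$) so that after averaging one still has enough slack between $3c^2/4$ and the Cauchy--Schwarz ceiling $\gamma c$ to extract the claimed quadratic-in-$c/\gamma$ lower bound on $|S|$.
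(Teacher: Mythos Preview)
Your proof is correct and follows essentially the same averaging/Markov argument as the paper: define $S$ by the desired inequality, average the quantity $\norm{v_i}^2 - \norm{v_i - \hat v}^2$ (equivalently $2\langle v_i,\hat v\rangle - c^2$) over $i$, and use an upper bound on the $i\in S$ terms. The only difference is that the paper bounds the $i\in S$ contribution by $\norm{v_i}^2 - \norm{v_i-\hat v}^2 \le \norm{v_i}^2 \le \gamma^2$ (simply using $\norm{v_i-\hat v}^2\ge 0$), which yields $|S|/r \ge c^2/(2\gamma^2)$ in one line and avoids your final quadratic verification; your Cauchy--Schwarz bound $\gamma c$ is sharper but makes you work slightly harder at the end.
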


\begin{proof}
Observe that
\begin{equation}\ex_{i\in [r]} \norm{v_i - \hat v}^2 = \bigl(\ex_{i\in [r]} \norm{v_i}^2\bigr) - \norm{\hat v}^2 = \bigl(\ex_{i\in [r]} \norm{v_i}^2\bigr) - c^2.\end{equation}
Therefore,
\begin{equation}c^2 = \ex_{i\in[r]}  \bigl(\norm{v_i}^2 - \norm{v_i - \hat  v}^2 \bigr) \le \frac{c^2}{2} + \frac{|S|}{r} \gamma^2.\end{equation}
This gives
\begin{equation} \frac{|S|}{r} \ge \frac{c^2}{2\gamma^2},\end{equation}
which is what we wanted.
\end{proof}

For a boolean matrix $A\in \{0,1\}^{X\times Y}$ with $|Y| =n$, the third and final lemma of this section supplies a partition $ Y = \bigcup_{i=1}^k S_i$ such that 
\begin{itemize}\setlength\itemsep{-2pt}
\item[i)] for every $S_i$ there is a row $x_i$ such that $A(x_i,y)=1$ for all $y \in S_i$; and
\item[ii)] for every row $x$ and every $\delta>0$, there are at most $O_\delta(\log n)$ sets $S_i$ with
\begin{equation}
\label{eq:delta_dense}
\Pr_{y \in S_i}\bigl[A(x,y)=1\bigr] \ge \delta.\end{equation}
\end{itemize}
A more general version of this statement for integer-valued matrices follows from this (by replacing each integer row with several boolean rows, each representing a different $b\in \NN\setminus\{0\}$) but we shall in fact prove the integer version directly.

\begin{lemma}\label{lemfirstpartition}
Let $A\in \ZZ^{X\times Y}$ be an integer matrix with no all-zero columns. There exists a partition $Y = \bigcup_{i=1}^k S_i$, along with rows $x_1,\ldots,x_k\in X$ and nonzero integers $b_1\ldots, b_k\in \ZZ\setminus\{0\}$ such that
\begin{itemize}\setlength\itemsep{-2pt}
\item[i)] for every $1\le i\le k$, we have $A(x_i,y) = b_i$ for all $y\in S_i$; and
\item[ii)] for all $\delta > 0$, every $x\in X$, and every $b\in \ZZ\setminus \{0\}$, the number of indices $1\le i\le k$ for which
\begin{equation}\Pr_{y \in S_i}\bigl[ A(x,y) = b\bigr] \ge \delta\end{equation}
is at most $(\log |Y| + 1)/\delta$.
\end{itemize}
\end{lemma}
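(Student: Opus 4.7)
The plan is a greedy peel-off of columns. I would initialize $Y_1 = Y$, and at stage $i$ (while $Y_i$ is nonempty) pick a pair $(x_i, b_i) \in X \times (\ZZ \setminus \{0\})$ maximizing the count
\begin{equation*}
\bigl| \{ y \in Y_i : A(x, y) = b\} \bigr|
\end{equation*}
over all $(x, b)$; the maximum is attained since $Y_i$ is finite, and it is at least $1$ because $A$ has no all-zero columns, so every $y \in Y_i$ contributes at least one valid candidate $(x, b)$ with $A(x,y) = b \neq 0$. Setting $S_i := \{ y \in Y_i : A(x_i, y) = b_i\}$ and $Y_{i+1} := Y_i \setminus S_i$ gives a strictly decreasing sequence of nonempty sets, so the process terminates in finitely many steps, producing a partition $Y = S_1 \sqcup \cdots \sqcup S_k$. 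Condition (i) is built into the construction.

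For (ii), I would fix $x \in X$, $b \in \ZZ \setminus \{0\}$, and $\delta > 0$, and track the size of $T_i := \{ y \in Y_i : A(x, y) = b\}$ across iterations. The greedy choice guarantees $|S_i| \geq |T_i|$ at every stage, because $(x, b)$ is always an admissible candidate in the maximization. When the probability in (ii) is at least $\delta$ --- call such a stage \emph{good} --- we obtain $|T_i \cap S_i| \geq \delta |S_i| \geq \delta |T_i|$, so that $|T_{i+1}| \leq (1 - \delta) |T_i|$; the remaining stages may leave $|T_i|$ unchanged. Thus each good stage shrinks the remaining count by a factor of at most $1 - \delta$.

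To bound the number $N$ of good stages, I would combine $|T_1| \leq |Y|$ with the observation that the last good stage $i^*$ must have $|T_{i^*}| \geq 1$ (otherwise the probability is $0$, not $\geq \delta$), giving $(1 - \delta)^{N - 1} |Y| \geq 1$. The elementary inequality $\log(1/(1-\delta)) \geq \delta$ then yields $N \leq 1 + (\log|Y|)/\delta \leq (\log|Y| + 1)/\delta$ whenever $\delta \leq 1$, and the case $\delta > 1$ is vacuous since the probability never exceeds $1$. I do not anticipate any real obstacle in this argument; the only subtle point is preserving the $+1$ in the final bound, which reflects the integer-valuedness of $|T_i|$: the sequence can persist at value $1$ through a final good stage before collapsing to $0$.
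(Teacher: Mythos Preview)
Your greedy construction is identical to the paper's Algorithm~I, and your argument for part~(ii) is correct. The analysis of~(ii), however, follows a genuinely different route. The paper assigns each $y\in S_i$ a cost $c(y)=1/|S_i|$, orders the elements of $\{y:A(x,y)=b\}$ by cost, and shows via the greedy property that the $i$th cost is at most $1/(t-i+1)$; summing gives the harmonic bound $\sum_{i=1}^k \Pr_{y\in S_i}[A(x,y)=b]\le \log|Y|+1$, from which the claim follows by Markov. Your argument instead tracks the residual set $T_i$ directly and observes that each good stage shrinks it multiplicatively by $1-\delta$, yielding $N\le 1+(\log|Y|)/\delta$ without ever summing the probabilities. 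Your approach is more elementary and in fact gives the slightly sharper intermediate bound $1+(\log|Y|)/\delta$ rather than $(\log|Y|+1)/\delta$; the paper's approach, on the other hand, proves the stronger auxiliary statement that the \emph{sum} of the probabilities is bounded by $\log|Y|+1$, though only the count bound is used downstream.
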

\goodbreak

\begin{proof}

Consider the following greedy algorithm.

\alg{Algorithm I}{Construct sets $S_i$} 
Given an integer matrix $A\in \ZZ^{X\times Y}$, this algorithm outputs a partition $Y = \bigcup_{i=1}^k S_i$, as well as rows $x_1,\ldots, x_k$ and nonzero integers $b_1, \ldots, b_k$.
\begin{itemize}\setlength\itemsep{-2pt}
\item[\textbf{I1.}] [Initialize.] Set $R\gets Y$ and $i \gets 1$.
\item[\textbf{I2.}] [Loop.] While $R\ne \emptyset$, repeat steps I3 and I4.
\begin{itemize}
\item[\textbf{I3.}] [Greedy choice.] Choose $(x_i, b_i)\in X\times \bigl( \ZZ \setminus \{0\}\bigr)$ that maximizes the size of
\[S_i = \bigl\{y \in R : A(x_i, y) = b_i\bigr\}.\]
\item[\textbf{I4.}] [Update.] Set $R \gets R\setminus S_i$ and increment $i\gets i+1$.
\end{itemize}
\end{itemize}
\goodbreak

The claim is that the output of Algorithm~I satisfies our two desired properties. It is clear that the sets $S_i$ form a partition of $Y$, and by their definition in step I3, they satisfy property (i).

To prove the second assertion, we assign the cost $c(y) = 1/|S_i|$ to each $y\in S_i$. Let $x\in X$ and $b\in \ZZ\setminus\{0\}$ be arbitrary, and consider the set
\begin{equation*}\{y_1,\ldots, y_t\} = \bigl\{ y\in Y : A(x,y) = b\bigr\},\end{equation*}
which we have ordered such that $c(y_1) \le \cdots \le c(y_t)$.

First we show that for every $1\le i\le t$,
\begin{equation} c(y_i) \le \frac{1}{t-i+1}.\end{equation}
Indeed, suppose towards a contraction that $c(y_i) > 1/(t-i+1)$. Let
$S_r$ be the member of our partition with $y_i\in S_r$, and suppose that $|S_r| = k$, so that $1/k = c(y_i) > 1/(t-i+1)$. The set
$S'= \{y_j \in Y : j\ge i\}$
has size $t-i+1 > k$, and $A(x,y) = b$ for all $y\in S'$, hence by the greedy choice made in step I3, the partition element $S_r$ containing $y_i$ must be at least as large as $S'$, and contain strictly more than $k$ elements. This is a contradiction.

Hence we bound
\begin{equation}\sum_{i=1}^k \Pr_{y\in S_i} \bigl[ A(x,y) = b\bigr] 
= \sum_{i=1}^k \frac{\bigl\{y\in S_i : A(x,y) = b\bigr\}}{|S_i|}
= \sum_{i=1}^t c(y_i) \le \sum_{i=1}^t \frac{1}{i} \le \log|Y| + 1,\end{equation}
and the number of indices $i$ with $\Pr_{y\in S_i} \bigl[ A(x,y) = b\bigr] \ge \delta$ can be at most $(\log|Y| + 1)/\delta$.
\end{proof}

We conclude this section by remarking that for $\delta \in [0,1/2)$, one cannot improve upon the logarithmic bound that appears in \Cref{lemfirstpartition}, at least without imposing further restrictions on $A$. To see this, randomly generate a matrix $A\in \{0,1\}^{n \times n}$ by independently setting the entries to $0$ or $1$ with probability $1/2$ each. A straightforward first moment argument shows that, with probability $1-o(1)$, for any partition of the columns into sets $S_1,\ldots,S_k$, there exists some row with density at least $1/2$ in every $S_i$ with $i \le (1/10)\log n$. Similarly, it is not hard to show that with probability $1-o(1)$, any partition satisfying (i) must consist of $k \ge (1/10)\log n$ sets.  

\section{The main argument}

We are now ready to prove our main theorem. Given a matrix $A\in \ZZ^{X\times Y}$ with $\normgamma A \le \gamma$, we wish to express it as a sum of matrices with small block complexity, which in turn gives a bound on the block complexity of $A$. Our strategy shall be to partition the columns $Y$ into subsets $S_i$ using \Cref{lemfirstpartition} and then 
define a new matrix $A'$ by averaging the columns $y$ belonging to each $S_i$. The block complexity of $A'$ can be bounded by invoking \Cref{lemblockcomplexitybound}, and \Cref{lemsubtractaverage} will help us show that $\normgamma{A-A'}^2$ decreases by a constant, making the argument suitable for iteration.

The catch is that subtracting averages takes us outside the realm of integer-valued matrices, so we must consider a slightly wider class of matrices. This motivates the following definition, which is our analogue of the almost integer-valued functions used in~\cite{greensanders}.
Given a matrix $A\in \RR^{X\times Y}$, let $A_\ZZ\in \ZZ^{X\times Y}$ denote the entrywise rounding of $A$ to the nearest integer matrix, where $b+1/2$ is rounded down to $b$ for all $b\in \ZZ$. For any real $\eps > 0$, we say that a real-valued matrix $A\in \RR^{X\times Y}$ is \emph{$\eps$-almost integer-valued} if $\normmax{A-A_\ZZ}\le \eps$.

The following lemma is the heart of our induction, allowing us to decrease the factorization norm.

\begin{lemma}[Key lemma]\label{lemkey}
Let $A\in \RR^{X\times Y}$ be a real-valued matrix with $\normgamma A = \gamma$. Suppose further that $A$ is $\eps$-almost integer-valued for $\eps = 2^{-20\gamma^2}$. If $A_\ZZ$ is not an all-zero matrix, then there exists a $2\eps$-almost integer-valued matrix $A'\in \RR^{X\times Y}$ such that
\begin{equation}\normgamma{A-A'}^2 \le \gamma^2 - \frac{1}{8}\end{equation}
and
\begin{equation}\block(A'_\ZZ) \le 2^{O(\gamma^7)} \bigl(\log |Y|\bigr)^2.\end{equation}
\end{lemma}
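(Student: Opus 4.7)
The plan is to construct $A'$ via a two-stage refinement of $Y$ followed by an averaging step. Fix a $\gamma$-factorization $A = UV$ with $\norm{u_x}\le 1$ and $\norm{v_y}\le \gamma$. First, apply \Cref{lemfirstpartition} to $A_\ZZ$ to obtain a partition $Y = \bigsqcup_i S_i$ with associated rows $x_i\in X$ and nonzero integers $b_i$ satisfying $A_\ZZ(x_i, y) = b_i$ on each $S_i$. Since $A$ is $\eps$-almost integer, $\bigl|\langle u_{x_i}, v_y\rangle - b_i\bigr| \le \eps$ for every $y \in S_i$, so for any subset $R \subseteq S_i$ the average $\hat v_R = \ex_{y \in R} v_y$ satisfies $\norm{\hat v_R} \ge |b_i| - \eps \ge 1 - \eps$. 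This uniform lower bound on the norm of averages is what drives the $\gamma_2$-norm reduction.

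Within each $S_i$, I would iterate \Cref{lemsubtractaverage}: starting from $R_0 = S_i$, at step $j$ extract a subset $G_j \subseteq R_j$ of size at least $\norm{\hat v_{R_j}}^2 |R_j|/(2\gamma^2) \gtrsim |R_j|/\gamma^2$ on which $\norm{v_y - \hat v_{R_j}}^2 \le \norm{v_y}^2 - \norm{\hat v_{R_j}}^2/2 \le \gamma^2 - 1/4$, then set $R_{j+1} = R_j \setminus G_j$ and record $\hat v_{i, j} := \hat v_{R_j}$ as the representative of the layer $G_{i,j} := G_j$. This terminates in $O(\gamma^2 \log|Y|)$ rounds per $S_i$, yielding a refined partition $Y = \bigsqcup_{i,j} G_{i,j}$. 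Setting $\tilde A(x, y) = \langle u_x, \hat v_{i(y), j(y)}\rangle$ and using the factorization $A - \tilde A = U W$ with $w_y = v_y - \hat v_{i(y), j(y)}$ then yields $\normgamma{A - \tilde A}^2 \le \gamma^2 - 1/4$, stronger than what the lemma requires.

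The principal obstacle is that $\tilde A$, being an average of near-integer values, need not itself be $2\eps$-close to any integer matrix. To remedy this, I would further subdivide each layer $G_{i,j}$ using the weighted Littlestone machinery of \S 3: by \Cref{propldimalphabound}, $\Ldim_\alpha(A_{X \times G_{i,j}})$ is bounded by $O(\gamma^4/\alpha^2)$, and iteratively applying \Cref{propalphasubset} with a small constant $\alpha$ (chosen so that $2\alpha$ plus the $\eps$ rounding error stays below $1/2$) carves $G_{i,j}$ into at most $2^{O(\gamma^7)}$ sub-pieces $P$, on each of which there is a function $g_P : X \to \RR$ with $|A(x, y) - g_P(x)| \le 2\alpha$ for all but an $\eps$-fraction of the $y \in P$. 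Redefining $A'(x, y) = (g_P(x))_\ZZ$ (the integer rounding) on each sub-piece makes $A'$ integer-valued (hence trivially $2\eps$-almost integer), and the $\gamma_2$ bound is preserved up to an $O(\eps)$ perturbation absorbed in the slack between $1/4$ and $1/8$.

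To bound $\block(A'_\ZZ)$, write $A' = \sum_{P} c_P \one_P^\T$ over the sub-pieces, with $c_P(x) = g_P(x)_\ZZ$, and group terms by their integer value. Property (ii) of \Cref{lemfirstpartition} limits, for each row $x$ and each nonzero integer $b$, the number of $S_i$ on which $A'(x,\cdot) = b$ on positive density to $O(\log|Y|)$; combined with the $2^{O(\gamma^7)}\log|Y|$ sub-pieces per $S_i$ and the $O(\gamma)$ bound on $|b|$, a careful accounting (after invoking \Cref{lemblockcomplexitybound}) gives $\block(A'_\ZZ) \le 2^{O(\gamma^7)}(\log|Y|)^2$. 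The principal technical task is to orchestrate the averaging and the Littlestone refinement so that both the $\gamma_2$ reduction and the almost-integer property hold simultaneously, and to carry out the block-complexity accounting carefully enough to harvest only two logarithmic factors — one from property (ii) of \Cref{lemfirstpartition} and one from the number of averaging layers per $S_i$.
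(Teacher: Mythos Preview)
Your plan assembles the right ingredients but in an order that breaks the argument. The quantity $\tilde A(x,y) = \langle u_x, \hat v_{i,j}\rangle$ is an average of $A(x,\cdot)$ over the set $R_j$, whereas your Littlestone refinement produces a function $g_P$ that controls $A(x,\cdot)$ only on the much smaller sub-piece $P\subseteq G_{i,j}\subseteq R_j$. There is no reason for the average over $R_j$ to be close to $g_P(x)$; in general $\tilde A(x,y) - (g_P(x))_\ZZ$ is $O(1)$, not $O(\eps)$. Worse, your assertion that ``the $\gamma_2$ bound is preserved up to an $O(\eps)$ perturbation'' is unsupported even if the entrywise difference were $O(\eps)$: the $\gamma_2$ norm is only bounded \emph{below} by the max norm, and once you replace $\tilde A$ by the unrelated matrix $A'=(g_P)_\ZZ$ you have thrown away the factorization $A-\tilde A = UW$ and have no handle on $\normgamma{A-A'}$ at all. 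This is the essential gap.

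The paper avoids it by reversing the order: apply the Littlestone machinery \emph{first} to each $S_i$ (with $\alpha=1/8$), obtaining $S_i'\subseteq S_i$ on which $A_\ZZ(x,\cdot)$ equals $g_i(x)$ for a $(1-\eps_1)$-fraction of columns. The point is that the $\eps$-almost-integer hypothesis upgrades the $2\alpha$-clustering to genuine $\eps$-clustering around the integer $g_i(x)$, so the average $\tilde A(x,y)=\ex_{y'\in S_i'}A(x,y')$ is automatically within $2\eps$ of $g_i(x)$ --- the averaging set and the clustering set coincide. Only \emph{then} does one apply \Cref{lemsubtractaverage} to pass to $S_i''\subseteq S_i'$ where the norm drop holds. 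This covers only $\tilde Y=\bigcup_i S_i''$, a $2^{-O(\gamma^7)}$ fraction of $Y$; the paper iterates the entire construction on the leftover columns, using the key observation that the row vectors $u_x$ never change, so all the resulting pieces share a common $U$ and the factorizations of $A-A'$ concatenate. This outer iteration is what supplies the second $\log|Y|$ factor.
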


\begin{proof}
Let $M = \normmax A$. 
If $A_\ZZ$ is not an all-zero matrix, then $M \ge 1/2$ and we have $\gamma \ge 1/2$ as well. This implies that $\eps \le 2^{-10\gamma} \le 1/32$. We fix a $\gamma$-factorization $A = UV$ of $A$, with $\norm{u_x} \le 1$ for all $x\in X$ and $\norm{v_y}\le \gamma$ for all $y\in Y$.

By applying \Cref{lemfirstpartition} to (the nonzero columns of) $A_\ZZ$, we obtain rows $x_1,\ldots,x_k\in X$, integers $b_1,\ldots,b_k\in \ZZ\setminus\{0\}$, and sets $S_1,\ldots,S_k\subseteq Y$
such that for every $1\le i\le k$ and every $y\in S_i$, we have $A_\ZZ(x_i,y) = b_i$. (The second statement of \Cref{lemfirstpartition} will not be used until later, and we defer our choice of $\delta$ until then.)

We now invoke the properties of the $\alpha$-weighted Littlestone dimension. By \Cref{propldimalphabound}, we have $\Ldim_{1/8}(A)\le O(\gamma^4)$, and as a result, we also have $\Ldim_{1/8}(A_{X\times S_i}) = O(\gamma^4)$ for all $1\le i\le k$. Set $\eps_1 = \eps/(10\gamma)$ and apply \Cref{propalphasubset} to each matrix $A_{X\times S_i}$ with this parameter to obtain subsets $S_i'\subseteq S_i$ with
\begin{equation} |S_i'| \ge \Bigl( \frac{\eps_1}{\gamma}\Bigr)^{\!O(\gamma^4)}|S_i|
\ge  {\eps_1}^{O(\gamma^5)} |S_i|\end{equation}
and functions $g'_i: X\to [-M,M]$.  Rounding each of these functions $g_i'$ to their nearest integer functions, we obtain functions $g_i : X\to [-M-1/2,M+1/2] \cap \ZZ$ such that
\begin{equation}\Pr_{y\in S_i'}\bigl[ A_\ZZ(x,y)\ne g_i(x)\bigr] \le
\Pr_{y\in S_i'} \Bigl[ \bigl| A(x,y) - g_i'(x) \bigr| \ge 1/4 \Bigr] \le \eps_1\end{equation} 
for all $1\le i\le k$, where we have applied the fact that $A$ is $\eps$-almost integer-valued for some $0<\eps\le 1/32$. 

\begin{figure}[t]
\includegraphics[width=390pt]{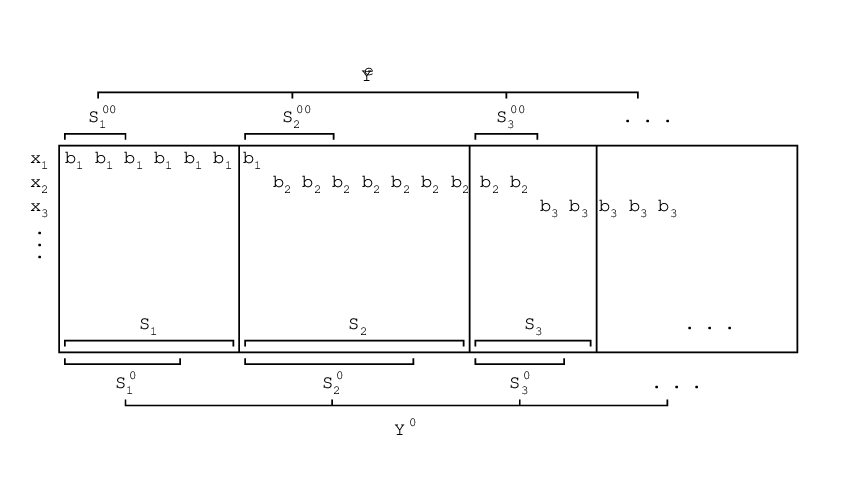}
\captionsetup{name=Fig.}
\vskip-20pt
\caption{A diagram illustrating various sets defined in the proof of \Cref{lemkey}.}
\end{figure}
For each $i$, let $\hat v_i$ denote the average $\ex_{y\in S_i'} v_y$,
and observe that since $A_\ZZ(x_i, y) = b_i \ne 0$,
\begin{equation}\bigl| \langle u_{x_i}, \hat v_i\rangle \bigr|
= \Bigl| \ex_{y\in S_i'} \langle u_{x_i}, v_y\rangle \Bigr|
= \Bigl| \ex_{y\in S_i'} A(x_i, y)\bigr|
\ge |b_i - \eps| \ge \frac{1}{2},\end{equation}
which in turn gives the bound
$\norm{\hat v_i} \ge 1/2$ by the Cauchy--Schwarz inequality.
We may then apply \Cref{lemsubtractaverage} to obtain subsets $S_i''\subseteq S_i'$ with
\begin{equation}|S_i''| \ge \frac{1}{8\gamma^2} |S_i'|\end{equation}
such that
\begin{equation}\norm{v_y - \hat v_i}^2 \le \norm{v_y}^2 - \frac{1}{8}\le \gamma^2 - \frac{1}{8}\end{equation}
for every $y\in S_i''$.
Let $Y' = \bigcup_{i=1}^k S_i'$ and $\tilde Y = \bigcup_{i=1}^k S_i''$;
we have
\begin{equation}\label{eqloselog}|\tilde Y| \ge \frac{1}{8\gamma^2} |Y'| \ge \frac{{\eps_1}^{O(\gamma^5)}}{8\gamma^2} |Y|
\ge 2^{-O(\gamma^7)}|Y|.\end{equation}
We now define a matrix $\hat A \in \RR^{X\times \tilde Y}$ by setting
\begin{equation} \hat A(x,y) = \ex_{y'\in S_i'} A(x,y') = \langle u_x, \hat v_i\rangle\end{equation}
for every $x\in X$ and $y\in S_i''$.
(Notice that the average is taken over $S_i'$ and not $S_i''$.)
Then, letting
$\tilde v_y = v_y - \hat v_i$ so that $\norm{\tilde v_y}^2 \le \gamma^2 - 1/8$, we have
\begin{equation}A(x,y) - \hat A(x,y) = \langle u_x, v_y - \hat v_i\rangle = \langle u_x, \tilde v_y\rangle\end{equation}
for every pair $(x,y)\in X\times S_i''$.

Next, we show that $\hat A$ is $2\eps$-almost integer-valued and bound the block complexity of $\hat A_\ZZ$. For every $x\in X$, the fact that $A$ is $\eps$-almost integer-valued, combined with our earlier bound $\Pr_{y\in S_i'}\bigl[ A_\ZZ(x,y)\ne g_i(x)\bigr] \le \eps_1$, shows that for all $(x,y)\in X\times S_i''$,
\begin{equation}\bigl| \hat A(x,y) - g_i(x)\bigr|
= \Bigl|\bigl(\ex_{y'\in S_i'} A(x,y') - g_i(x) \bigr)\Bigr|\le (1-\eps_1)\eps + \eps_1 (2M+1) \le \eps + (2\gamma+1)\eps_1\le  2\eps.\end{equation} 
That is, $\hat A$ is a $2\eps$-almost integer-valued matrix, and $\hat A_\ZZ(x,y) = g_i(x)$ for all $(x,y)\in X\times S_i''$. Define the matrix $G\in \ZZ^{X\times [k]}$ by $G(x,i) = g_i(x)$. Since $\hat A_\ZZ$ is obtained from $G$ by creating $|S_i''|$ copies of each column $i$, we have $\block(\hat A_\ZZ) = \block(G)$. Consider a fixed $x\in X$ and integer $b\ne 0$. If $g_i(x) = b$, then since $\Pr_{y\in S_i'}\bigl[ A_\ZZ(x,y) = b\bigr] \ge 1-\eps_1$, we have
\begin{equation}\Pr_{y\in S_i}\bigl[ A_\ZZ(x,y) = b\bigr] \ge   (1-\eps_1)\frac{|S_i'|}{|S_i|} \ge (1-\eps_1){\eps_1}^{O(\gamma^5)}\ge 2^{-O(\gamma^7)}.\end{equation}
It is now time to apply the second assertion of \Cref{lemfirstpartition}, with $\delta = 2^{-O(\gamma^7)}$. It tells us that there are at most $\bigl(\log|Y| + 1\bigr)/\delta = 2^{O(\gamma^7)} \log |Y|$ indices $i$ with $g_i(x) = b$. In other words, for any $x\in X$, each of the nonzero integers in the range $[-M,M]$ appears in the sum $\sum_{i=1}^k \bigl|g_i(x)\bigr|$ no more than $2^{O(\gamma^7)}\log|Y|$ times. There are no more than $2M$ nonzero integers in this range, and we have
\begin{equation}\sum_{i=1}^k \bigl|g_i(x)\bigr| \le 2M^2 \cdot 2^{O(\gamma^7)} \log|Y| = 2^{O(\gamma^7)}\log|Y|.\end{equation}
By \Cref{lemblockcomplexitybound}, we see that
\begin{equation}\label{eqtildeblockbound}\block(\hat A_\ZZ) = \block(G) \le 2\max_{x\in X} \sum_{i=1}^k \bigl|g_i(x)\bigr| \le 2^{O(\gamma^7)}\log|Y|.\end{equation}

The key observation at this juncture is that, moving from a $\gamma$-factorization $A = UV$ to a $\gamma$-factorization $\hat A = \hat U\hat V$, we may take $\hat U = U$; in other words, the row vectors $u_x$ remain unchanged. This allows us to repeatedly apply what we derived above to the remaining columns and concatenate all the matrices $\hat A$ into a single matrix $A'\in \RR^{X\times Y}$ with $\normgamma{A-A'}^2\le \gamma^2 - 1/8$.

In this spirit, starting with $Y_0 = Y$ and $\tilde Y_0 = \emptyset$, we construct a partition $Y = \bigcup_{i=1}^t \tilde Y_i$ and matrices $\hat A_i \in \RR^{X\times \tilde Y_i}$ by repeatedly applying the argument above to the decreasing sequence $Y_i = Y\setminus \bigcup_{j=0}^{i-1} \tilde Y_j$. Since $|Y_i| \le \bigl( 1-2^{-O(\gamma^7)}\bigr) |Y_{i-1}|$, this process stops after $t \le 2^{O(\gamma^7)} \log |Y|$ steps.

Let $A'\in \RR^{X\times Y}$ be the matrix with $A'_{X\times \tilde Y_i} = \hat A_i$ for all $1\le i\le t$. We have
\begin{equation}\block(A'_\ZZ) \le \sum_{i=1}^t \block\bigl( (\hat A_i)_\ZZ\bigr) \le t\cdot 2^{O(\gamma^7)} \log |Y| = 2^{O(\gamma^7)} \bigl(\log|Y|\bigr)^2.\end{equation}
By construction, the difference $A-A'$ may be factorized $A-A' = UV'$ where every column $v'_y$ in $V'$ satisfies $\norm{v'_y}^2\le \gamma^2-1/8$, so $\normgamma{A-A'}^2 \le \gamma^2 - 1/8$.
\end{proof}


Our main theorem now follows by iterating \Cref{lemkey}. In fact, we prove the following more general statement for almost integer-valued matrices.

\begin{theorem}[Generalized main theorem]\label{thmgeneralmain}
There is a constant $C$ such that the following holds. For every finite $\eps$-almost integer-valued matrix $A\in \RR^{X\times Y}$ with $\normgamma A\le \gamma$ and $\eps= 2^{-20\gamma^2}$, the block complexity of $A_\ZZ$ satisfies
\begin{equation}\block(A_\ZZ) \le 2^{C\gamma^7} \bigl(\log|Y|\bigr)^2.\end{equation}
\end{theorem}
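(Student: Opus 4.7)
The plan is to iterate \Cref{lemkey} via induction on $\gamma^2$. The base case is $\gamma < 1/2$, where $\normmax A \le \normgamma A < 1/2$ forces every entry of $A$ to round to zero, so $A_\ZZ = 0$ and $\block(A_\ZZ) = 0$. For the inductive step with $\gamma \ge 1/2$, if $A_\ZZ = 0$ we are done; otherwise I would apply the Key Lemma to produce a $2\eps$-almost integer-valued matrix $A'$ with $\block(A'_\ZZ) \le 2^{O(\gamma^7)}(\log|Y|)^2$ and $\normgamma{A - A'}^2 \le \gamma^2 - 1/8$, and then recursively apply the theorem to $A - A'$ with new parameter $\gamma' = \sqrt{\gamma^2 - 1/8}$.

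Two checks are required before invoking the recursion. First, $A - A'$ must be $2^{-20(\gamma')^2}$-almost integer-valued. Writing $A = A_\ZZ + \mu$ and $A' = A'_\ZZ + \nu$ with $|\mu| \le \eps$ and $|\nu| \le 2\eps$ entrywise, the total error $|\mu - \nu| \le 3\eps$ is well below $1/2$ (since $\eps \le 2^{-5}$), which yields both $(A - A')_\ZZ = A_\ZZ - A'_\ZZ$ and that $A - A'$ is $3\eps$-almost integer-valued. The tolerance required by the Key Lemma in the recursive call is $2^{-20(\gamma')^2} = 2^{5/2}\cdot 2^{-20\gamma^2}$, which accommodates $3\eps$ precisely because $3 < 2^{5/2}$. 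Second, the same identity $(A - A')_\ZZ = A_\ZZ - A'_\ZZ$ gives the additivity $\block(A_\ZZ) \le \block(A'_\ZZ) + \block((A - A')_\ZZ)$ needed to combine the two pieces.

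Since each level of the recursion decreases $\gamma^2$ by $1/8$, there are at most $8\gamma^2$ levels before the base case is reached, and each contributes at most $2^{O(\gamma^7)}(\log|Y|)^2$ to the block complexity, for a total of $O(\gamma^2)\cdot 2^{O(\gamma^7)}(\log|Y|)^2 = 2^{O(\gamma^7)}(\log|Y|)^2$. The main obstacle I foresee is bookkeeping the growing almost-integer error across iterations; the entire argument hinges on the elementary inequality $3 < 2^{5/2}$, which confirms that the constant $20$ in the definition $\eps = 2^{-20\gamma^2}$ is chosen with enough slack for the induction to close.
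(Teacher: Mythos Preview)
Your proposal is correct and follows essentially the same approach as the paper: induct on $\gamma$ (equivalently on $\gamma^2$), use the base case $\gamma<1/2$, apply \Cref{lemkey} to peel off $A'$, verify that $A-A'$ is $3\eps$-almost integer-valued with $3\eps\le 2^{-20(\gamma^2-1/8)}$ via $3<2^{5/2}$, and combine using $(A-A')_\ZZ=A_\ZZ-A'_\ZZ$. The only cosmetic difference is that the paper tracks an explicit constant $C$ through the induction to absorb the sum, whereas you bound the recursion depth by $8\gamma^2$ and sum $O(\gamma^2)$ contributions of size $2^{O(\gamma^7)}(\log|Y|)^2$; both routes yield the same $2^{O(\gamma^7)}(\log|Y|)^2$ bound.
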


\begin{proof}
If $\gamma < 1/2$, then as observed earlier, we have $\normmax A<1/2$ and $A_\ZZ$ is an all-zero matrix, which has block complexity zero. To prove the theorem, we bound the case $\normgamma{A}^2 \le \gamma^2$ in terms of the case $\normgamma{A}^2 \le \gamma^2 - 1/8$. Since decreasing $\gamma^2$ in this manner can only be done a finite number of times before it must satisfy $\gamma < 1/2$, the theorem follows by induction. 

If $\gamma \ge 1/2$, then by \Cref{lemkey}, there is a $2\eps$-almost integer-valued matrix $A'\in \RR^{X\times Y}$ with
\begin{equation}\normgamma{A-A'}^2 \le \gamma^2 - \frac{1}{8}\end{equation}
and
\begin{equation}\block(A'_\ZZ) \le 2^{C'\gamma^7} \bigl(\log |Y|\bigr)^2\end{equation}
for some absolute constant $C'$. Set $C= \max\{100C', 8^{7/2}\}$. 
The matrix $A-A'$ is $3\eps$-almost integer-valued and
\begin{equation}3\eps \le 3\cdot 2^{-20\gamma^2} \le 2^{-20(\gamma^2 - 1/8)},\end{equation}
so the induction hypothesis provides the bound
\begin{equation}\block\bigl( (A-A')_\ZZ\bigr) \le 2^{C(\gamma^2 - 1/8)^{7/2}}\bigl(\log|Y|\bigr)^2.\end{equation}
This yields
\begin{align}\block(A_\ZZ) &\le \block(A'_\ZZ) + \block\bigl( (A-A')_\ZZ\bigr) \cr
&\le \bigl(2^{C'\gamma^7} + 2^{C(\gamma^2 - 1/8)^{7/2}}\bigr) \bigl(\log |Y|\bigr)^2\cr
&\le  \bigl(2^{C(\gamma^2 - 1/8)^{7/2} + 1}\bigr)\bigl(\log |Y|\bigr)^2\cr
&\le2^{C\gamma^7} \bigl(\log|Y|\bigr)^2,\cr
\end{align}
where our assumption that $\gamma \ge 1/2$ implies
that $C \ge  C'\gamma^7/(\gamma^2-1/8)^{7/2}$.
\end{proof}

\paragraph{Concluding remarks.}\hskip-0.3em
There are two places in which we lose a logarithmic factor, resulting in the factor of $\bigl(\log|Y|\bigr)^2$ in our final theorem. The first is in \Cref{lemfirstpartition}, when we bound the number of indices by $\bigl(\log|Y|+1\bigr)/\delta$, and the second is in~\eqref{eqloselog}, where one discards a constant fraction of the columns. Tightening these bounds appears out of reach of our current methods; we suspect that novel ideas will be necessary to prove \Cref{conjblocky}. 

Recently, M.~G\"o\"os, N.~Harms, and A.~Riazanov proved~\cite{ghr2025} that for every $\eps > 0$, there exists an $n\times n$ boolean matrix $M$ and a real $\eps$-approximation $\tilde M$ of $M$ such that $\normgamma{\tilde M} \le O_\eps(1)$, while $\norm{M}_{\gamma_2} \ge 2^{\Omega_\eps\bigl(\sqrt{\log n}\bigr)}$ and hence $\block(M) \ge 2^{\Omega_\eps\bigl(\sqrt{\log n}\bigr)}$.  Their result implies that in \Cref{thmmain}, the hypothesis that $\eps$ is sufficiently small as a function of $\gamma$ is necessary, and that any proof of our theorem or of \Cref{conjblocky} must make essential use of the assumption that $A$ is integer-valued (or at least almost integer-valued).


\paragraph{Related work.}\hskip-0.3em
A recent paper of I.~Balla, L.~Hambardzumyan, and I.~Tomon approaches \Cref{conjblocky} from a different angle. To state their result, we introduce the following definition.
A \emph{monochromatic rectangle} in an integer matrix $A$ is the product of a subset $S$ of rows
and a subset $T$ of columns such that the entries $A(i,j)$ are all the same
for $(i,j)\in S\times T$.
We shall call a monochromatic rectangle all of whose entries are $b$ a \emph{$b$-rectangle}.

\begin{theorem}[{\rm\cite{bht2025}}, Theorem 1\/]\label{thmbht}
Suppose that $A$ is an $m\times n$ boolean matrix with
$\normgamma A\le \gamma$. There is a monochromatic rectangle $S\times T$ in $A$, where $S\subseteq [m]$ and $T\subseteq [n]$ satisfy
\begin{equation}\frac{|S|\cdot|T|}{mn} \ge 2^{-O(\gamma^3)}.\end{equation}
Specifically, if more than half of $A$'s entries are $1$, then $S\times T$ is a $1$-rectangle, and otherwise it is a $0$-rectangle.
\end{theorem}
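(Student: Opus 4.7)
My plan is to apply the Littlestone-dimension clustering of \Cref{propsubset} twice---once on rows, once on columns---to obtain a large subrectangle on which each row and column of $A$ is nearly constant; then invoke a Grothendieck/cut-norm bound to pinpoint the dominant color as the majority color of $A$; and finally clean up the near-constant subrectangle to a fully monochromatic one.

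\textbf{Construction.} By replacing $A$ with $J-A$ if necessary (noting $\normgamma{J-A}\le\gamma+1$), I may assume without loss of generality that the majority color is $1$ and aim for a $1$-rectangle. Form the sign matrix $A'=2A-J$, which has $\normgamma{A'}\le 2\gamma+1$ and hence $\Ldim(A')=O(\gamma^2)$ by \Cref{propldimbound}. Apply \Cref{propsubset} to $A'$ with parameter $\epsilon=2^{-\Theta(\gamma)}$ to obtain $Y_1\subseteq Y$ with $|Y_1|\ge\epsilon^{O(\gamma^2)}n=2^{-O(\gamma^3)}n$ and $\sigma\colon X\to\{\pm 1\}$ such that each row of $A'$ agrees with $\sigma(x)$ on a $(1-\epsilon)$-fraction of $Y_1$. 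Applied to the transpose of $A'|_{X\times Y_1}$, the same proposition yields $X_1\subseteq X$ with $|X_1|\ge 2^{-O(\gamma^3)}m$ and $\tau\colon Y_1\to\{\pm 1\}$ with the analogous property. Partition $X_1\times Y_1$ into the four quadrants $X_\pm\times Y_\pm$ induced by the signs of $\sigma,\tau$.

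\textbf{Quadrant analysis and color alignment.} On the ``conflict'' quadrant $X_+\times Y_-$, every $+1$-entry violates the column constraint and every $-1$-entry violates the row constraint; a double-counting gives $|X_+||Y_-|\le\epsilon\bigl(|X_1||Y_-|+|Y_1||X_+|\bigr)$, which forces at least one of $|X_+|/|X_1|$ or $|Y_-|/|Y_1|$ to be $O(\epsilon)$. A symmetric argument on $X_-\times Y_+$, combined with the pigeonhole that $|X_+|$ and $|X_-|$ cannot both be $O(\epsilon)|X_1|$ (and similarly for $Y_\pm$), leaves only two surviving cases: either $X_+\times Y_+$ (mostly $+1$) or $X_-\times Y_-$ (mostly $-1$) occupies a $(1-O(\epsilon))$-fraction of $|X_1||Y_1|$. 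To ensure we land in the first case, I invoke the Grothendieck bound $\|A-\tfrac{1}{2}J\|_{\infty\to 1}=O(\gamma\sqrt{mn})$: whenever $|X_1||Y_1|\gg\gamma\sqrt{mn}$, the density of $1$'s on $X_1\times Y_1$ matches the global density within $o(1)$, so the majority-$1$ hypothesis selects the $X_+\times Y_+$ quadrant. The complementary regime $mn=O(2^{O(\gamma^3)})$ makes the theorem trivial, since any single $1$-entry already furnishes a $1\times 1$ rectangle of relative size $\ge 2^{-O(\gamma^3)}$.

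\textbf{Cleanup and main obstacle.} On $X_+\times Y_+$ there are at most $\epsilon|X_+||Y_+|$ zero entries, with every row having $\le\epsilon|Y_1|$ zeros and every column having $\le\epsilon|X_1|$ zeros; applying K\"onig's theorem to the bipartite graph of zero-positions produces a small vertex cover, and removing the covered rows and columns leaves an all-$1$ subrectangle of size $\ge 2^{-O(\gamma^3)}mn$, which is the desired $S\times T$. The cleanup step is the main technical obstacle: for highly unbalanced shape ratios $|X_+|\gg|Y_+|$ (or vice versa), a K\"onig vertex cover could in principle be concentrated on the smaller side and wipe out an entire row (or column) class, so one must argue instead via iterated high-degree removal that balances vertex losses across the two sides. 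The choice $\epsilon=2^{-\Theta(\gamma)}$ is what makes the propsubset size loss $\epsilon^{O(\gamma^2)}$ match the stated $2^{-O(\gamma^3)}$ bound while keeping zeros sparse enough for cleanup; obtaining a sharper $\gamma^2$ exponent would likely require a genuinely different argument, such as iterating a density-increment lemma $O(\gamma)$ times with $2^{-O(\gamma^2)}$ loss per step.
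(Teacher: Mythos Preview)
This theorem is not proved in the present paper---it is quoted from \cite{bht2025} in the related-work section---so there is no in-paper argument to compare against. I comment on your proposal directly.

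Your quadrant analysis is sound and does produce a subrectangle of area $2^{-O(\gamma^3)}mn$ on which every row and every column has at most an $O(\eps)$-fraction of wrong-sign entries. Two subsequent steps, however, fail. First, the bound $\|A-\tfrac12 J\|_{\infty\to 1}=O(\gamma\sqrt{mn})$ is false: for $A=J$ one has $\normgamma{A}=1$ but $\|\tfrac12 J\|_{\infty\to 1}=\tfrac12 mn$; bounded $\gamma_2$ norm simply does not imply cut-norm bounds of this strength, so your colour-alignment mechanism is invalid and the regime split that follows it collapses. Second---and this is the more serious gap---the cleanup from ``near-monochromatic'' to ``monochromatic'' cannot be carried out by combinatorics on the zero-graph alone. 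K\"onig's theorem does not help: if the zeros on $X_+\times Y_+$ form a perfect matching (one per row and column, so zero-density $1/|Y_+|\ll\eps$), the minimum vertex cover already has size $\min(|X_+|,|Y_+|)$ and removing it wipes out an entire side. More generally, an $\eps'$-fraction-regular bipartite zero-graph (as in a random construction) admits no all-one subrectangle larger than roughly $O(1/\eps')\times|Y_+|$, which is a vanishing fraction of the area. Any correct argument must therefore re-invoke the hypothesis $\normgamma{A}\le\gamma$ at the cleanup stage, and your outline gives no indication of how to do so; the proof in \cite{bht2025} proceeds by a quite different density-increment scheme and does not pass through a Littlestone-clustering step at all.
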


A proof that \Cref{thmbht} would follow from \Cref{conjblocky} is presented as \cite[Lemma~3.5]{hhh2023}.
A subsequent paper of the authors uses \Cref{thmbht} to show that for any matrix $A$ with bounded $\gamma_2$ norm, one can find a large blocky matrix that contains a constant fraction of the $1$-entries in $A$.

\begin{theorem}[{\rm\cite{blockysubset2025}}, Theorem 1.4\/]
Let $A$ be an $m\times n$ boolean matrix $A$ with $\normgamma A\le \gamma$ in which the number of $1$-entries is $F$. There exists an $m\times n$ blocky matrix $B$ containing at least $F/2^{2^{O(\gamma)}}$ $1$-entries, such that for all $(i,j)\in [m]\times [n]$, we have $B(i,j) = 1$ only if $A(i,j) = 1$. 
\end{theorem}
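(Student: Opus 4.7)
My plan is to construct the blocky matrix $B$ by iteratively applying \Cref{thmbht} to extract $1$-rectangles from $A$ and its substructures, taking care that the rectangles assembled into $B$ have pairwise disjoint row supports and pairwise disjoint column supports so that their union remains blocky. The argument would begin with a density dichotomy. If $A$ has $1$-density at least $1/2$, that is $F \ge mn/2$, then \Cref{thmbht} directly yields a $1$-rectangle $S \times T$ with $|S|\cdot|T| \ge 2^{-O(\gamma^3)} mn \ge 2^{-O(\gamma^3)} \cdot 2F$, and the single block $S \times T$ alone already forms a blocky matrix containing $\ge F/2^{O(\gamma^3)}$ $1$-entries of $A$, comfortably beating the stated bound.

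When $A$ is sparse ($F < mn/2$), I would apply \Cref{thmbht} to the complement matrix $\bar A = J - A$ instead, noting that $\normgamma{\bar A} \le 1 + \gamma$ by the triangle inequality. This yields a large $0$-rectangle $S \times T$ in $A$ of size $|S|\cdot|T| \ge 2^{-O(\gamma^3)} mn$, and all $1$-entries of $A$ then lie in the three pairwise disjoint subrectangles $R_1 = S^c \times T$, $R_2 = S^c \times T^c$, and $R_3 = S \times T^c$, containing $F_1, F_2, F_3$ ones respectively. The key combinatorial observation is that $R_1$ and $R_3$ have disjoint row supports ($S^c$ versus $S$) and disjoint column supports ($T$ versus $T^c$), so a blocky matrix extracted from $A|_{R_1}$ can be safely unioned with a blocky matrix extracted from $A|_{R_3}$ to yield a valid blocky matrix. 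I would then split further: if $F_1 + F_3 \ge F/2$, recurse on $A|_{R_1}$ and $A|_{R_3}$ separately and output the union of the resulting blocky matrices; otherwise $F_2 \ge F/2$ and I would recurse on the strictly smaller submatrix $A|_{R_2}$.

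The main obstacle will be closing the induction with a constant $C(\gamma) = 2^{2^{O(\gamma)}}$. The problematic scenario is a chain of ``$F_2 \ge F/2$'' recursions: when every step lands in that subcase, the dimensions of the working submatrix shrink by only a factor of $1 - 2^{-O(\gamma^3)}$ while the $1$-count may halve, so the density need not increase and the chain may fail to terminate in the dense regime quickly. Controlling this chain will likely require either a potential-function argument that combines the current density with structural information from a $\gamma$-factorization, or a sparse-regime variant of \Cref{thmbht} that produces a $1$-rectangle of size comparable to $F$ rather than to $mn$. The doubly exponential exponent in $F/2^{2^{O(\gamma)}}$ ultimately reflects roughly $2^{O(\gamma)}$ layers of such sparse-to-sparse reductions, each potentially halving the $1$-count, compounded with the $2^{O(\gamma^3)}$ loss factor from each application of \Cref{thmbht}.
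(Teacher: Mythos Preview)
The paper does not actually contain a proof of this theorem: it is quoted in the ``Related work'' paragraph as a result from a separate paper \cite{blockysubset2025}, so there is no proof here against which to compare your proposal.

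On the substance of your sketch, the obstacle you yourself flag is a genuine gap, not a technicality. In the ``$F_2 \ge F/2$'' branch you recurse on $A|_{S^c \times T^c}$, but \Cref{thmbht} only guarantees $|S|\cdot|T| \ge \delta mn$ with $\delta = 2^{-O(\gamma^3)}$; it does not force $|S^c|\cdot|T^c|$ to shrink by any fixed factor (take $|S| = \delta m$, $|T| = n$). Hence along a chain of $F_2$-steps the ambient area can stay essentially $mn$ while the number of $1$-entries halves at every step, so the density of the working submatrix can decrease geometrically and you may never reach the dense regime. Nothing in your outline prevents an unbounded number of such steps, and each halving of $F$ costs you a factor of $2$ in the final bound, so no function of $\gamma$ alone falls out of this recursion as written. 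Closing this requires an additional idea---for instance, a version of \Cref{thmbht} that locates a $1$-rectangle whose size is comparable to $F$ rather than to $mn$, or a potential that is guaranteed to drop in the $F_2$ branch---and that idea is precisely the content of the cited paper, not of the present one.
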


\section*{Acknowledgements}

We would like to thank Ben Cheung, Nathan Harms, Rupert Levene, Vern Paulsen, and Ivan Todorov for helpful discussions. We are also indebted to the anonymous referees for valuable suggestions and for pointing out some minor errors that appeared in a preliminary version of the paper, including an issue with our earlier definition of $\alpha$-weighted Littlestone dimension.
Both authors are funded by the Natural Sciences and Engineering Research Council of Canada.
\bibliographystyle{alphacitation}
\xpatchcmd{\em}{\itshape}{\slshape}{}{}
\bibliography{citations}
\goodbreak
\bigskip\noindent
\textsc{Department of Mathematics and Statistics, McGill University, Montreal, Quebec, Canada}

\smallskip\noindent
\textsl{E-mail address}: \texttt{marcel.goh@mail.mcgill.ca}

\bigskip\noindent
\textsc{School of Computer Science, McGill University, Montreal, Quebec, Canada}

\smallskip\noindent
\textsl{E-mail address}: \texttt{hatami@cs.mcgill.ca}

\end{document}